\documentclass[11pt,reqno]{article}
\usepackage{amsmath,amssymb,amsthm}
\usepackage[utf8]{inputenc}
\setlength{\textwidth}{5.5in}
\usepackage[margin=0.9in]{geometry}
\usepackage[dvipsnames]{xcolor}
\usepackage[colorlinks=true,linkcolor=blue,citecolor=magenta]{hyperref}
\usepackage{authblk}
\usepackage{comment}
\usepackage{graphicx}
\usepackage{enumitem}
\usepackage{mlmodern}
\usepackage[T1]{fontenc}

\newtheorem{theorem}{Theorem}[section]
\newtheorem{lemma}[theorem]{Lemma}
\newtheorem{corollary}[theorem]{Corollary}
\newtheorem{definition}[theorem]{Definition}
\newtheorem{proposition}[theorem]{Proposition}

\setlength\parindent{0pt}

\newcounter{local}
%\newcounter{locallocal}
\newcommand{\scl}{\stepcounter{local}}
\newcommand{\bra}[1]{\left(#1\right)}

\newcommand{\z}{\varepsilon}
\newcommand{\ca}{C^{\alpha, \frac{\alpha}{2}}(\bar{Q})}

\newcommand{\eps}{\varepsilon}
\newcommand{\wt}{\tilde}
\newcommand{\wh}{\widehat}

\newcommand{\h}{\hspace{1cm}}

\renewcommand{\L}{\mathsf{L}}
\newcommand{\dH}{d\mathcal H}
\newcommand{\by}{\begin{eqnarray}}
	\newcommand{\ey}{\end{eqnarray}}
\newcommand{\bys}{\begin{eqnarray*}}
	\newcommand{\eys}{\end{eqnarray*}}

\title{Existence, Stability and Optimal Drug Dosage for a Reaction-Diffusion System Arising in a Cancer Treatment}

\author[1]{Jeff Morgan}
\author[2]{Bao Quoc Tang\footnote{Corresponding author.}}
\author[3]{Hong-Ming Yin}
\affil[1]{\small Department of Mathematics, University of Houston, Houston, USA.\break
	\href{mailto:jjmorgan@uh.edu}{jjmorgan@uh.edu}}
\affil[2]{\small Department of Mathematics and Scientific Computing, University of Graz, Graz, Austria\break  \href{mailto:quoc.tang@uni-graz.at}{quoc.tang@uni-graz.at}}
\affil[3]{\small Department of Mathematics and Statistics, Washington State University,
	Pullman, USA\break   \href{mailto:hyin@wsu.edu}{hyin@wsu.edu}}

\date{}

\begin{document}
	\maketitle
	
	\begin{abstract}
		In this paper, a reaction-diffusion system modeling injection of a chemotherapeutic drug on the surface of a living tissue during a treatment for cancer patients is studied. The system describes the interaction of the chemotherapeutic drug and  the normal, tumor and immune cells. We first establish well-posedness for the nonlinear reaction-diffusion system, then investigate the long-time behavior of solutions. Particularly, it is shown that the cancer cells will be eliminated assuming that its reproduction rate is sufficiently small in a short time period in each treatment interval. The analysis is then essentially exploited to study an optimal drug injection rate problem during a chemotherapeutic drug treatment for tumor cells, which is formulated as an optimal boundary control problem with constraints. For this, we show that the existence of an optimal drug injection rate through the boundary, and derive the first-order optimality condition.
		
		\medskip
		{\bf AMS Subject Classification}: 35E20, 35K40, 35K57, 49J20, 92C50.
		
		{\bf Keywords:}  \textit{Modeling cells and drug interaction; Nonlinear reaction-diffusion system; Optimal injection rate; Optimal boundary control problem.} 
	\end{abstract}
	
	%	\tableofcontents
	
	\section{Introduction}\label{sec1}
	
	Cancer is a major human disease in modern society, for which finding a cure is a great challenge for scientists and medical practitioners. One of the important tasks in fighting this type of disease is to understand how a medical drug interacts with normal, cancer and immune cells. Toward this goal, many researchers have proposed mathematical models based on a system of ordinary differential equations (ODE model) (see, for examples,  \cite{AD1993,BP2000,FGP1999,KP1998,OS1999,WK2014}). One of the popular mathematical models is the following ODE system which describes the interaction of normal, tumor, immune cells and a medical drug, whose concentrations are denoted by $\wt N(t)$, $\wt T(t)$, $\wt I(t)$, and $\wt U(t)$, respectively, (see \cite{BP2000,KP1998,OS1999}):
	\begin{equation}\label{ODEsys}
		\begin{aligned}
			\wt N_t&=r_1\wt N(1-b_1\wt N) -c_4\wt T\wt N-a_3(1-e^{-\wt U})\wt N,\\
			\wt T_t&=r_2\wt T(1-b_2\wt T)-c_2\wt I\wt T-c_3\wt T\wt N-a_2(1-e^{-\wt U})\wt T,\\
			\wt I_t&= s(t)+\frac{\rho \wt I\wt T}{\alpha+\wt T}-c_1\wt I\wt T-k_1\wt I
			-a_1(1-e^{-\wt U})\wt I,\\
			\wt U_t&=v(t)-k_2\wt U.
		\end{aligned}
	\end{equation} 
	Here $s(t)$ and $v(t)$ are the injection rates of
	immune cell and the medical drug, respectively. The values of various parameters in the ODE system \eqref{ODEsys}
	are observed and measured in medical clinics and research
	laboratories (see data in \cite{KMTP1994}). However, it is well observed in experiments and clinical data (\cite{RCM2007}) that cells will diffuse to the surrounding area of living tissue (see Ansarizadeh-Singh-Richards \cite{ASR2017}, Bellomo-Preziosi \cite{BP2000}, Friedman \cite{F2006}, Friedman-Kim \cite{FK2011}, Lou-Ni \cite{NI}, Roose-Chapman-Maini\cite{RCM2007}, Wodarz-Komarova \cite{WK2014}). One can also find many more references in an excellent survey article which gave a summary of various models up to the year 2010 by J. S. Lowengrub, etc. \cite{LF2010}. Therefore, one needs to take a diffusion process into consideration in the mathematical modelling. This fact leads to the following PDE model:
	\begin{equation}\label{PDEsys1}
		\begin{aligned}
			\wh N_t&=\nabla\cdot
			(d_1(x,t)\nabla \wh N)+\wh F_1(\wh N,\wh T,\wh I,\wh U), && (x,t)\in Q_{\tilde{t}}, \\
			\wh T_t&=\nabla\cdot (d_2(x,t)\nabla \wh T)+\wh F_2(\wh N,\wh T,\wh I,\wh U), && (x,t)\in Q_{\tilde{t}},\\
			\wh I_t&=\nabla\cdot (d_3(x,t)\nabla \wh I)+\wh F_3(\wh N,\wh T,\wh I,\wh U), &&(x,t)\in Q_{\tilde{t}},     \\
			\wh U_t&=\nabla\cdot (d_4(x,t)\nabla \wh U)+\wh F_4(\wh N,\wh T,\wh I,\wh U), && (x,t)\in Q_{\tilde{t}},
		\end{aligned}
	\end{equation}
	where $d_i(x,t)$ represents the space-time dependent diffusion coefficient for each type of component, $Q_{\tilde{t}}=\Omega\times (0, \widetilde{t})$ for $\widetilde{t}>0$ and $\Omega$ is a bounded domain in $\mathbb R^n$ with $C^2$-boundary $S=\partial \Omega$.
	
	The interaction functions $\wh F_1,\wh F_2,\wh F_3$ and $\wh F_4$ are similar to those in the ODE
	model \eqref{ODEsys} (see \cite{CSS1992,KMTP1994,KP1998}):
	\bys & & \wh F_1(\wh N,\wh T,\wh I,\wh U)=r_1\wh N(1-b_1\wh N) -c_4\wh T\wh N-a_3(1-e^{-\wh U})\wh N,\\
	& & \wh F_2(\wh N,\wh T,\wh I,\wh U)=r_2\wh T(1-b_2\wh T)-c_2\wh I\wh T-c_3\wh T\wh N-a_2(1-e^{-\wh U})\wh T,\\
	& & \wh F_3(\wh N,\wh T,\wh I,\wh U)= s(x,t)+\frac{\rho \wh I\wh T}{\alpha+\wh T}-c_1\wh I\wh T-k_1\wh I
	-a_1(1-e^{-\wh U})\wh I,\\
	& & \wh F_4(\wh N,\wh T,\wh I,\wh U)=v(x,t)-k_2\wh U.
	\eys
	The  various parameters in $\wh F_i$ are derived for cell-growth
	models from the clinical data (see \cite{KMTP1994}) and $s(x,t)$ is the injection rate of immune cells while $v(x,t)$ represents the injection rate of external chemotherapeutic drug. We remark that $s(x,t)$ can be considered a combination of the natural growth and an injection, and as a result, $s(x,t)$ could be a considered a control variable. However, in this work we will treat $s(x,t)$ as given and focus only on the drug dosage rate $v(x,t)$ as a control.

	\medskip
	For the ODE model \eqref{ODEsys}, De Pillis and Radunskaya in 2003
	(\cite{DR2003}) illustrated a very interesting dynamic of the
	interaction among normal, tumor and immune cells under the treatment of the chemotherapeutic drug. They obtained a range of parameters for
	which the steady-state solution of the system \eqref{ODEsys} is
	stable or unstable. Particularly, they analyzed how the region of
	tumor cells changes under the influence of immune cell and
	chemotherapeutic drug. For the PDE model \eqref{PDEsys1},
	Ansarizadeh-Singh-Richards (\cite{ASR2017}) in 2017
	studied the dynamics of the solution in one-space dimension. They
	obtained similar dynamical stability results for the range of
	various parameters as the ODE model \eqref{ODEsys}. Particularly, they proved the Jeff's phenomenon (see \cite{KMTP1994}) observed in clinical data, which suggests that a tumor might continue to grow after treatment, and then, some time after treatment has ceased, begin to decrease in size, see \cite{DR2001}. Another interesting result was demonstrated in \cite{ASR2017}, where they numerically showed that in order to slow the growth of the tumor, the chemotherapeutic drug should be injected near
	the invasive front of the tumor (optimal location). 
	
	\medskip
	Recently, the author of \cite{Yin2022}
	studied the PDE model \eqref{PDEsys1} for any space dimension. For the nonlinear system \eqref{PDEsys1} subject to appropriate initial-boundary conditions, the global existence and uniqueness are established in \cite{Yin2022}. Particularly, it is shown that under certain conditions the cancer cells will be eliminated after a long-time treatment. Moreover, the author of \cite{Yin2022} also studied the optimal drug dosage problem and proved that there exists an optimal amount of drug during a chemotherapeutic drug treatment for cancer patients.
	
	\medskip
	In this paper we study extended formulation of these existing models, where we assume that the drug will be injected through the surface of a living tissue at periodic intervals. This will definitely be a more effective method during a chemotherapeutic drug treatment. For the corresponding nonlinear reaction-diffusion system, we first prove that there exists a unique global weak solution which is uniformly bounded. Then the asymptotic behavior of the solution to is studied. In particular, it is shown that for the extinction of tumor cells, it is sufficient to require that the reproduction rate of tumor cells is small enough in a short period during each treatment interval (see assumption \eqref{H5}). This somehow aligns well with the observed Jeff's phenomenon \cite{KMTP1994}. Next we turn our attention to finding the optimal drug injection rate during a chemotherapeutic treatment for cancer patients. The resulting  optimal control problem is complicated due to certain constraints that the number of normal and immune cells have to remain above a certain level. We use a penalty method to prove that there exists an optimal drug injection rate for the problem. Moreover, we derive the first order optimality condition for the optimal solution. This condition will provide an effective way to calculate the numerical solution for the optimal control problem.
	
	\medskip
	We end this introduction section by emphasizing that our paper aims at a thorough well-posedness and stability analysis for a RDS model of the form \eqref{PDEsys_main}--\eqref{def_F}, which arises from cancer treatment. These analysis are then exploited to investigate an boundary optimal control problem with constraints. There are plenty of papers modeling tumor-immune actions, where the reaction terms are somewhat different from what is considered in the current paper, which is due specific and different mechanisms, see e.g. \cite{anderson2024global,nikopoulou2021mathematical,lai2017combination} and references therein. We believe that our methods and analysis here can be well adapted to study to those models, and this will be investigated in future research.
	
	\medskip
	
	The paper is organized as follows. In section \ref{sec2}, we describe the generalized PDE model for the above problem. Subsections \ref{sec3} and \ref{sec4} are devoted to the global existence, uniqueness of solutions and their large time stability, respectively. In Section \ref{sec5} we investigate the optimal control problem where we prove the existence of an optimal drug dosage during a chemotherapeutic treatment in subsection \ref{subsec5.1} and derive the optimality condition in subsection \ref{subsec5.2}. Some concluding remarks are given in the final Section \ref{sec6}.

	\section{The Generalized Mathematical Model}\label{sec2}
	In the derivation of the mathematical model, we assume that the concentration for each type of cells satisfies a logistic growth model. 
	Moreover, we assume the interaction between various cells and chemotherapeutic drug have the same pattern as the ODE model \eqref{ODEsys} with the modification that the drug treatment occurs on the boundary, and the immune cells also satisfy a logistic growth.
	With these assumptions, we see that the concentrations of normal, tumor and immune cells satisfy the following reaction-diffusion system\footnote{It is worthwhile to mention that all results in this paper can be extended to the case where the diffusion coefficients depend on all concentrations, i.e. $d_i = d_i(x,t,N,T,I,U)$. We leave the details for the interested reader.}:
	\begin{equation}\label{PDEsys_main}
		\left\{
		\begin{aligned}
			N_t&=\nabla\cdot
			(d_1(x,t,N)\nabla N)+F_1(N,T,I,U), &&(x,t)\in Q_{\tilde{t}},\\
			T_t&=\nabla \cdot(d_2(x,t,T)\nabla T)+F_2(N,T,I,U), && (x,t)\in Q_{\tilde{t}},\\
			I_t&=\nabla \cdot(d_3(x,t,I)\nabla I)+F_3(N,T,I,U), && (x,t)\in Q_{\tilde{t}},\\
			U_t&=\nabla\cdot (d_4(x,t,U)\nabla U)+F_4(N,T,I,U), && (x,t)\in Q_{\tilde{t}},
		\end{aligned}
		\right.
	\end{equation}
	where the nonlinearities $F_1$,$F_2$, $F_3$ and $F_4$ are given as follows: 
	\begin{equation}\label{def_F}
		\begin{aligned}
			&F_1(N,T,I,U) = r_1N(1-b_1N) -c_4TN-a_3(1-e^{-U})N,\\
			&F_2(N,T,I,U) =  r_2T(1-b_2T)-c_2IT-c_3TN-a_2(1-e^{-U})T,\\
			&F_3(N,T,I,U) =  r_3I(1-b_3I)+s(x,t)+\frac{\rho IT}{\alpha+T}-c_1IT-k_1I - a_1(1-e^{-U})I,\\
			&F_4(N,T,I,U) =  -k_2(x,t,N,T,I,U)U,
		\end{aligned}
	\end{equation}
	where $r_i=r_i(x,t,N,T,I,U)$, $i=1,2,3$, is the ``coefficient'' of the logistic growth for each type of cells, $1/b_i$, $i=1,2,3$, represents the reciprocal of the maximum capacity, and the consumption rate $k_2$ for the medical drug is assumed to depend on space-time variables and all types of cells. 
	
	\medskip
	From  clinic practices, the chemotherapeutic drug treatment for patients must be stopped if the concentration of normal cells is below a certain level. Therefore, the drug injection should be stopped if the concentration of normal cells is smaller than a certain level, say, $a_0>0$, where naturally we impose $0<a_0<1/b_1$ which is the capacity of normal cell density. To reflect this fact, we introduce a Heaviside-like function $H(s)$ which is a $C^1$-function with $H(s)=1$ if $s\geq \delta$ for some small $\delta>0$ and $H(s)=0$ if $s< 0$. The drug injection rate $v(x,t)$ through the boundary is given by
	\[  d_4\nabla_{\nu}U(x,t)=v(x,t)H(N-a_0), \;(x,t)\in S\times (0,\infty). \]
	Therefore, for the system \eqref{PDEsys_main} we prescribe the following initial and boundary conditions:
	\begin{equation}\label{BC_IC}
		\left\{
		\begin{aligned}
			&Z(x,0)
			=Z_0(x), \quad Z\in \{N,T,I,U\},\; x\in \Omega,\\
			&\nabla_{\nu} Z(x,t) = 0,  \quad Z\in \{N,T,I\}, \; (x,t)\in S\times (0, \wt{t}),\\
			& d_4\nabla_{\nu}U(x,t))=v(x,t)H(N-a_0),\quad (x,t)\in S\times (0,\wt{t})
		\end{aligned}
		\right.
	\end{equation}
	where $\nu$ is the outward unit normal on $S$ and $\nabla_{\nu}$ represents the normal derivative on $S$.
	
	\medskip
	It is our aim to find an optimal drug injection rate $v(x,t)$ through the boundary $S$ during a chemotherapeutic drug treatment for cancer patients, where the underlying state variables satisfy the nonlinear reaction-diffusion system \eqref{PDEsys_main}--\eqref{BC_IC}, in the sense that a cost function involving the total mass of tumor cells is minimized. This leads us to first study the well-posedness for the nonlinear reaction-diffusion system \eqref{PDEsys_main}--\eqref{BC_IC}. These results are themselves of theoretical interest and also provide the tools in the study of the optimal control problem.
	
	% A weak solution in $V_2(Q):=C([0,\infty);H^1(\Omega))$ for the system \eqref{PDEsys_main}--\eqref{BC_IC} can be defined in a standard way  (see \cite{Evans,LSU}).
	
	\medskip
	In the following subsection, we briefly show that the system \eqref{PDEsys_main}--\eqref{BC_IC} has a unique global weak solution which is uniformly bounded in time. Moreover, the weak solution is also regular if all of the coefficients in the system \eqref{PDEsys_main} are smooth. We also prove in subsection \ref{sec4} that  the solution to the system \eqref{PDEsys_main}--\eqref{BC_IC} converges to the solution of the corresponding steady-state system under certain conditions. In the sequel, we denote $\mathbb R_+=[0,\infty)$ and we use the following function spaces
	\begin{equation*}
		L^{\infty}(Q_{\tilde{t}}):= \{u: Q_{\tilde{t}}\to \mathbb R\,|\; \text{ess sup}_{Q_{\tilde{t}}}|u(x,t)| < +\infty\},
	\end{equation*}
	\begin{equation*}
		\ca:= \{u: \overline{Q_{\tilde{t}}}\to \mathbb R\,|\; u(\cdot,t) \in C^{\alpha}(\bar \Omega) \, \forall t\in [0,\widetilde{t}]\text{ and } u(x,\cdot)\in C^{\frac{\alpha}{2}}([0,\widetilde{t}])\, \forall x\in\bar{\Omega}\}.
	\end{equation*}
	The space $V_2(Q_{\tilde{t}})$ is defined through the norm
	\[ \|u\|_{V_2(Q_{\tilde{t}})}:=\max_{t\in [0,\widetilde{t}]}\|u(t)\|_{L^2(\Omega)}+\|\nabla u\|_{L^2(Q_{\tilde{t}})}.\]
	For brevity, a vector function ${\boldsymbol f}=(f_1,f_2,\cdots,f_m)$ belonging in a product space $B^m$ simply means each component is in the space $B$, and the exponent $m$ is sometimes omitted. 
	
	\subsection{Global Existence, Uniqueness and Regularity}\label{sec3}
	In this section we show that the problem \eqref{PDEsys_main}--\eqref{BC_IC} admits a unique global non-negative bounded weak
	solution under certain minimum conditions on the
	coefficients and known data. 
	
	\medskip
	The following basic conditions are assumed throughout this section. 
	\begin{enumerate}[label=(\textbf{H\theenumi}),ref=\textbf{H\theenumi}]
		\item\label{H1} Let $d_i(x,t,z)$, $i=1,\ldots, 4$, be such that $d_i(x,t,\cdot) \in C^1(\mathbb R_+)$ uniformly in $(x,t)\in Q_{\tilde{t}}$,  $d_i(\cdot,\cdot,z)\in L^\infty(Q_{\tilde{t}})$ for each $z\in \mathbb R$ and $\tilde{t}>0$, and there exists a constant $\delta_0$ such that
		\[ 0<\delta_0\leq d_i(x,t,z), \h (x,t,z)\in Q_{\tilde{t}}\times \mathbb R_+,\;  i=1,\cdots,4.\]
		Also, for each $\tilde{t}>0$, $r_i(x,t,N,T,I,U) \in L^{\infty}(Q_{\tilde{t}}\times \mathbb R_+^4)$ is locally Lipschitz continuous w.r.t. $N,T,I,U$ uniformly in $(x,t)\in Q_{\tilde{t}}$, and
		there exist constants $r_0$ and $R_0$ such that
		\[ 0<r_0\leq r_i(x,t,N,T,I,U)\leq R_0\]
		for all $(x,t,N,T,I,U)\in Q_{\tilde{t}}\times \mathbb R_+^4$.
		
		\item\label{H2} For each $\tilde{t}>0$ the functions $v(x,t)$ and $s(x,t)$ are nonnegative a.e. in $S\times(0,\tilde{t})$ and $Q_{\tilde{t}}$, respectively, and there
		exists a constant $A_1$ such that
		\[ \|v\|_{L^{\infty}(S\times (0,\tilde{t}))}+\|s\|_{L^{\infty}(Q_{\tilde{t}}))}\leq A_1.\]
		In addition, the Heaviside-like function $H(z)\in C^1(\mathbb R)$ is increasing and satisfies $H(z)=0, z\in (-\infty,0]$ and $H(z)=1$ in $[\delta, \infty)$
		for a small constant $\delta>0.$
		
		\item\label{H3} The initial data $(N_0,T_0,I_0,U_0)\in L^{\infty}(\Omega)$ are nonnegative.
		
		\item\label{H4} The parameters $a_i, c_i$ and $k_1,\alpha, \rho$ in the system \eqref{PDEsys_main} are positive.
		Moreover,  $k_2\in L^{\infty}(Q_{\tilde{t}}\times \mathbb R_+^4)$ and  $k_2(x,t,N,T,I,U)\geq k_0>0$ for some constant $k_0$.
	\end{enumerate}

	\medskip
	We start with the definition of a weak solution to \eqref{PDEsys_main}.
	\begin{definition}\label{def_weak_sol}
		A non-negative quadruple $(N,T,I,U)$ is called a non-negative weak solution to \eqref{PDEsys_main}--\eqref{BC_IC} on $Q_{\tilde{t}} = \Omega\times(0,\widetilde{t})$ if for any $Z\in \{N,T,I,U\}$,
		\begin{equation*}
			Z\in V_2(Q)\cap L^{\infty}(Q_{\tilde{t}}), \; \partial_tZ\in L^2(0,\wt{t};(H^1(\Omega))^*),
		\end{equation*}
		\begin{equation*}
			Z(x,0) = Z_0(x), \quad \text{a.e. } x\in\Omega,
		\end{equation*}
		and for any smooth test function $\varphi$ it holds for $Z\in \{N,T,I\}$, $i=1, 2, 3$,
		\begin{equation*}
			\iint_{Q_{\tilde{t}}}\partial_t Z \varphi dxdt + \iint_{Q_{\tilde{t}}}d_i(x,t,Z)\nabla Z\cdot \nabla \varphi dxdt = \iint_{Q_{\tilde{t}}}F_i(N,T,I,U)\varphi dxdt,
		\end{equation*}
		and
		\begin{align*}
			\iint_{Q_{\tilde{t}}}\partial_t U\varphi dxdt + \iint_{Q_{\tilde{t}}}d_4(x,t,U)\nabla U\cdot \nabla \varphi dxdt \\
			= \iint_{S\times(0,\wt{t})}vH(N-a_0)\varphi d\mathcal{H}dt + \iint_{Q_{\tilde{t}}}F_4(N,T,I,U)\varphi dxdt.
		\end{align*}
		A weak solution is called global when it is a weak solution on $Q_{\tilde{t}}$ for any $\widetilde{t}>0$.
	\end{definition}
	The main result of this subsection is the global existence and boundedness of the solution to \eqref{PDEsys_main}--\eqref{BC_IC}. To do that, we use the following lemma which can be of independent interest. The proof of this lemma follows from \cite{AL1979}, and is postponed to the Appendix \ref{appendix1} for completeness.
	\begin{lemma}\label{lem:Linfty}
		Let $t_0>0$ and $0\le u \in L^{\infty}(\Omega\times(0,t_0))\times L^2(0,t_0;H^1(\Omega))$ satisfy
		\begin{equation}\label{h1}
			\begin{cases}
				\partial_t u \le \nabla\cdot (\delta(x,t,u)\nabla u) + au + b, &x\in\Omega, 0<t<t_0,\\
				\delta(x,t,u)\nabla u \cdot \eta \le cu + d(x,t), &x\in\partial\Omega, 0<t<t_0,\\
				u(x,0) = u_0(x), &x\in\Omega,
			\end{cases}
		\end{equation}
		in the weak sense, with $u_0\in L^{\infty}(\Omega)$, $0< \delta_0 \le \delta(x,t,z)$ for all $(x,t,z) \in \Omega\times(0,t_0)\times \mathbb R$, $d\in L^{\infty}(S\times(0,t_0))$, and $a,b,c\ge 0$. Then there is a constant $L>0$ dependent only on $\delta_0, a, b, c, \Omega$, $\|d\|_{L^{\infty}(S\times(0,t_0))}$, $\|u_0\|_{L^{\infty}(\Omega)}$, and $\|u\|_{L^{\infty}(0,t_0;L^1(\Omega))}$, but not explicitly on $t_0$, such that
		\begin{equation*}
			\|u\|_{L^{\infty}(\Omega\times(0,t_0))} \le L.
		\end{equation*}
	\end{lemma}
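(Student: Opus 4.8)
The plan is to prove this $L^\infty$ bound by a Moser--Alikakos iteration (in the spirit of \cite{AL1979}), the one subtle point being uniformity in $t_0$, which is obtained by carrying out every estimate on time windows of length at most one. First split $(0,t_0)$ into $\big(0,\min\{1,t_0\}\big)$ and, when $t_0>1$, into $(1,t_0)$ as well. On the \emph{fixed} interval $\big(0,\min\{1,t_0\}\big)$ one needs only $u_0\in L^\infty(\Omega)$ and the $L^\infty$-bounds on $a,b,c,d$: testing the inequality with $u$, absorbing the boundary term $\int_{\partial\Omega}(cu+d)u$ via the trace inequality into $\delta_0\|\nabla u\|_{L^2(\Omega)}^{2}$, and invoking Gronwall on an interval of length $\le1$, one bounds $\sup_{t\le\min\{1,t_0\}}\|u(t)\|_{L^2(\Omega)}^{2}+\|\nabla u\|_{L^2}^{2}$ by a constant depending on $\|u_0\|_{L^\infty}$ and the data only --- finite precisely because the interval is of length $\le 1$, so the non-dissipative contributions $au+b$, $cu+d$ enter merely through factors like $e^{a}$, $b$, $\|d\|_{L^\infty}$. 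The Moser iteration described below (here carrying $\int_\Omega u_0^{p_k}\,dx$ terms in lieu of time cutoffs) then upgrades this to $\|u\|_{L^\infty(\Omega\times(0,\min\{1,t_0\}))}\le D$, with $D=D(\delta_0,a,b,c,\Omega,\|d\|_{L^\infty},\|u_0\|_{L^\infty})$ independent of $t_0$; note $M:=\|u\|_{L^\infty(0,t_0;L^1(\Omega))}$ is not needed here.

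For the bulk part, fix $t_\ast\in(1,t_0)$ and work on the unit window $J:=(t_\ast-1,t_\ast)\subset(0,t_0)$. For $k\ge0$ put $p_k:=2\big(\tfrac{n+2}{n}\big)^{k}$ and $J_k:=\big(t_\ast-\tfrac12-2^{-k-1},\,t_\ast\big)$, so that $J_0=J$ and the $J_k$ decrease to $J_\infty:=(t_\ast-\tfrac12,t_\ast)$; choose smooth time cutoffs $\chi_k$ with $0\le\chi_k\le1$, $\chi_k\equiv1$ on $J_{k+1}$, $\operatorname{supp}\chi_k\subset\overline{J_k}$ and $|\chi_k'|\le C\,2^{k}$. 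Since $t_\ast-1>0$, each $\chi_k$ vanishes at the left endpoint of $J$, so no initial datum appears. Testing the differential inequality with $\varphi=\chi_k^{2}\,u^{p_k-1}$ --- legitimate, after a Steklov-averaging/truncation regularization, because $u$ is (qualitatively) bounded and $au+b,\,cu+d\in L^\infty$ --- and using $\delta(x,t,u)\ge\delta_0$ together with $\nabla\chi_k\equiv0$, one arrives for $w_k:=u^{p_k/2}$ at an estimate of the form
\[
\sup_{t\in J_{k+1}}\int_\Omega w_k^{2}\,dx+\delta_0\iint_{J_{k+1}\times\Omega}|\nabla w_k|^{2}\,dx\,dt\;\le\;C\,p_k^{2}\,2^{k}\iint_{J_k\times\Omega}w_k^{2}\,dx\,dt+R_0,
\]
where the Robin boundary integral $\iint_{J_k\times S}(cu+d)\,\chi_k^{2}\,u^{p_k-1}$ has been controlled by writing $u^{p_k-1}\le u^{p_k}+1$, applying the trace inequality $\|w_k\|_{L^2(S)}^{2}\le\epsilon\|\nabla w_k\|_{L^2(\Omega)}^{2}+C_\epsilon\|w_k\|_{L^2(\Omega)}^{2}$ with $\epsilon$ calibrated against $\delta_0/p_k^{2}$ to absorb the gradient part on the left, and collecting the remaining zeroth-order bulk and boundary terms into a \emph{fixed} constant $R_0=R_0(b,\|d\|_{L^\infty},\Omega)$ independent of $k$ and of $t_0$ (the constant $C$ here depends only on $\delta_0,a,b,c,\|d\|_{L^\infty},\Omega$).

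Feeding $w_k$ into the parabolic Gagliardo--Nirenberg--Sobolev embedding $L^\infty_tL^2_x\cap L^2_tH^1_x\hookrightarrow L^{2(n+2)/n}_{t,x}$ on $J_{k+1}\times\Omega$ and using $w_k^{2(n+2)/n}=w_{k+1}^{2}$ together with $p_{k+1}=\tfrac{n+2}{n}p_k$, the displayed inequality turns into a Moser recursion for the quantities $Y_k:=\iint_{J_k\times\Omega}u^{p_k}+1$ with a factor growing only polynomially in $p_k$ and geometrically in $k$; since $\sum_{k\ge0}p_k^{-1}(k+\log p_k)<\infty$, iterating gives $\operatorname*{ess\,sup}_{J_\infty\times\Omega}u\le C_1\big(\big(\iint_{J\times\Omega}u^{2}\big)^{1/2}+1\big)$, where $C_1$ depends only on $n,\delta_0,a,b,c,\Omega$ and $\|d\|_{L^\infty}$ --- crucially not on $t_0$. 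This is where the $L^1$-hypothesis enters: since $u\ge0$,
\[
\iint_{J\times\Omega}u^{2}\,dx\,dt\le\int_J\|u(t)\|_{L^1(\Omega)}\,\|u(t)\|_{L^\infty(\Omega)}\,dt\le M\,\|u\|_{L^\infty(\Omega\times(0,t_0))}
\]
because $|J|=1$. Writing $X:=\|u\|_{L^\infty(\Omega\times(0,t_0))}$, which is \emph{finite} by hypothesis, and taking the supremum over $t_\ast\in(1,t_0)$ --- whose windows $J_\infty$ cover $\big(\tfrac12,t_0\big)$ --- we obtain $\|u\|_{L^\infty(\Omega\times(\frac12,t_0))}^{2}\le 2C_1^{2}\big(MX+1\big)$. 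Combining with the $\big(0,\min\{1,t_0\}\big)$-bound yields the scalar inequality $X^{2}\le\max\{D^{2},\,2C_1^{2}(MX+1)\}$, whose solution is $X\le L$ for an explicit constant $L=L(\delta_0,a,b,c,\Omega,\|d\|_{L^\infty},\|u_0\|_{L^\infty},M)$, with no dependence on $t_0$; this is precisely the assertion.

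The main obstacle is to keep the entire scheme strictly local in time so that $t_0$ never enters --- which is exactly what makes the non-decaying terms $au+b$, $cu+d$ acceptable (they are tamed on intervals of length one) --- while at the same time absorbing the Robin boundary term through a $p_k$-calibrated trace inequality without destroying the summability $\sum_k p_k^{-1}(k+\log p_k)<\infty$ on which the Moser iteration rests. The closing move --- converting the merely qualitative bound $u\in L^\infty$ into the quantitative constant $L$ via the absorption $\iint u^{2}\le MX$ and the resulting scalar inequality for $X$ --- is the standard device for Alikakos-type lemmas, and making the test-function computation with powers of $u$ rigorous under the stated low regularity is routine via Steklov averaging.
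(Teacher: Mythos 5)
Your argument is correct, but it follows a genuinely different route from the one in the paper. The paper's proof (Appendix A) is a global-in-time Alikakos iteration with exponents $2^k$: one tests with $u^{2^k-1}$, absorbs the boundary and lower-order terms via the spatial Gagliardo--Nirenberg and trace interpolation inequalities with $\eps$ calibrated to $C_k^{-1}$, and then \emph{adds} $\delta_0\int_\Omega u_k^2$ to both sides so as to obtain $\frac{d}{dt}\|u_k\|_{L^2}^2+\delta_0\|u_k\|_{L^2}^2\le \widetilde C_k\bigl(\int_\Omega u_k\bigr)^2+C_2$; the exponential damping in Gronwall is what removes the $t_0$-dependence, and the recursion bounds the sup-in-time $L^{2^k}$ norm by the sup-in-time $L^{2^{k-1}}$ norm, telescoping down to $\|u\|_{L^\infty(0,t_0;L^1)}$ and $\|u_0\|_{L^\infty}$. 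You instead obtain uniformity in $t_0$ by localizing on unit time windows with cutoffs, run a Moser-type iteration with exponents $p_k=2(\tfrac{n+2}{n})^k$ and the parabolic embedding $L^\infty_tL^2_x\cap L^2_tH^1_x\hookrightarrow L^{2(n+2)/n}_{t,x}$, and then close with the absorption $\iint_J u^2\le M\,\|u\|_{L^\infty}$ and a scalar inequality for $X=\|u\|_{L^\infty(\Omega\times(0,t_0))}$. Both are sound; the trade-off is that your closing step genuinely uses the hypothesis that $u$ is a priori (qualitatively) in $L^\infty$, which the lemma does grant but which the paper's derivation never needs --- its estimate is a direct a priori bound in terms of $\|u\|_{L^\infty_tL^1_x}$, which is slightly more robust for the approximation scheme in Theorem \ref{thm1}; on the other hand, your window-based argument is the standard parabolic local-boundedness machinery and avoids the somewhat delicate $\eps\sim C_k^{-1}$ calibration of the trace/GNS constants that the paper carries through every level. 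One small bookkeeping remark: the additive term you call $R_0$ is not literally independent of $k$ (the zeroth-order contributions $b\iint u^{p_k-1}$ and $\|d\|_\infty\iint_S u^{p_k-1}$ produce constants growing polynomially in $p_k$ after normalizing the time-derivative term), but since such factors can be merged into the multiplicative constant $C\,p_k^{m}2^k$ and $\sum_k p_k^{-1}\log(Cp_k^m2^k)<\infty$, this does not affect the convergence of the iteration or the final constant.
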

	\begin{theorem}\label{thm1}
		Assume \eqref{H1}--\eqref{H4}. There exists a global non-negative, bounded weak solution to \eqref{PDEsys_main}, i.e.
		\begin{equation*}
			\|Z\|_{L^{\infty}(\Omega\times (0,\wt{t}))} < \infty, \quad  \forall Z\in \{N,T,I,U\}, \quad \forall \wt{t} > 0.
		\end{equation*}
		In addition, if all functions $d_i,r_i, k_2, s, v$ are smooth and the initial data satisfy the compatibility conditions
		\begin{equation}\label{compatibility_condition}
			\begin{aligned}
				&\nabla_{\nu}Z_0(x) = 0, \; Z\in \{N,T,I\}, \; x\in S, \\
				&d_4\nabla_{\nu}U_0(x) = v(x,0)H(N_0(x)-a_0), \; x\in S,
			\end{aligned}
		\end{equation}
		then the solution is classical in $Q_{\tilde{t}}$ for each $\tilde{t}>0$.
		
		Moreover, if for any $z\in \mathbb R$, and any $\tilde t>0$, the mapping $\Omega\times [0,\tilde t] \ni (x,t)\to d_i(x,t,z)$ is H\"older continuous in each component, then the aforementioned weak solution is unique.
	\end{theorem}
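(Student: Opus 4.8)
The plan is to establish in turn: global existence of a bounded nonnegative weak solution, the regularity statement, and uniqueness.

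\emph{Step 1: construction and nonnegativity.} For $M>0$ fix a smooth nondecreasing cut-off $\theta_M:\mathbb R\to[0,M+1]$ with $\theta_M\equiv 0$ on $(-\infty,0]$, $\theta_M(s)=s$ on $[0,M]$, and $\theta_M$ concave on $[M,\infty)$, so that $0\le\theta_M(s)\le s$ for all $s\ge 0$. In \eqref{PDEsys_main}--\eqref{BC_IC} replace each concentration occurring inside $d_i$, $F_i$ and $H$ by its $\theta_M$-truncation; the resulting reactions $F_i^M$ are bounded and locally Lipschitz. On a fixed cylinder $Q_{\tilde t}$, a weak solution $(N_M,T_M,I_M,U_M)\in(V_2(Q_{\tilde t})\cap L^\infty(Q_{\tilde t}))^4$ is produced by a Schauder fixed-point argument in $L^2(Q_{\tilde t})^4$: given frozen data $(\bar N,\bar T,\bar I,\bar U)$ one solves the four decoupled linear parabolic problems whose leading coefficients are $d_i$ evaluated at the $\theta_M$-truncation of the corresponding frozen variable and whose sources $F_i^M(\bar N,\bar T,\bar I,\bar U)$ (and, for $U$, Neumann datum $vH(\theta_M(\bar N)-a_0)$) lie in $L^\infty$; the linear energy estimates, uniform in the frozen data, make this map invariant on a large $L^\infty$-ball, continuity is routine, and compactness follows from the Aubin--Lions lemma since each linear solution has $\partial_t$ bounded in $L^2(0,\tilde t;(H^1(\Omega))^*)$. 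Nonnegativity is then obtained by testing each equation with the negative part of its own unknown: because $\theta_M(s)=0$ for $s\le 0$, each $F_i^M$ is nonnegative on the set where the corresponding component is negative, the boundary flux $vH(\theta_M(N_M)-a_0)$ is nonnegative, and a Gronwall estimate (using $Z_0\ge 0$) forces $(Z_M)_-\equiv 0$ for all $Z\in\{N,T,I,U\}$.

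\emph{Step 2: $M$-uniform bounds and existence.} Using $0\le\theta_M(Z_M)\le Z_M$, nonnegativity of all components, $1-e^{-\theta_M(U_M)}\ge 0$, and $\theta_M(T_M)/(\alpha+\theta_M(T_M))\le 1$, the logistic structure of the reactions yields the pointwise estimates $F_1^M\le R_0N_M$, $F_2^M\le R_0T_M$, $F_3^M\le(R_0+\rho)I_M+A_1$, and $F_4^M\le 0$. Comparison with spatially constant supersolutions for $N_M,T_M,I_M$, and Lemma~\ref{lem:Linfty} for $U_M$ (whose $L^1(\Omega)$-norm is bounded by integrating its equation and using $v\le A_1$), then give $\|Z_M\|_{L^\infty(Q_{\tilde t})}\le L(\tilde t)$ with $L(\tilde t)$ independent of $M$. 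Choosing $M>L(\tilde t)$ forces $\theta_M(Z_M)=Z_M$ on $Q_{\tilde t}$, so $(N_M,T_M,I_M,U_M)$ solves \eqref{PDEsys_main}--\eqref{BC_IC}; as $\tilde t>0$ is arbitrary, this proves the first assertion.

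\emph{Step 3: regularity.} Assume all coefficients and initial data smooth and \eqref{compatibility_condition}. For a bounded weak solution the four equations are linear with bounded measurable leading coefficients $d_i(x,t,Z)$ and bounded right-hand sides and Neumann data, so the De Giorgi--Nash--Moser estimate for the Neumann problem gives $N,T,I,U\in C^{\alpha,\alpha/2}(\overline{Q_{\tilde t}})$ for some $\alpha\in(0,1)$. Hence $d_i(x,t,Z)$ and $F_i(N,T,I,U)$ are in $C^{\alpha,\alpha/2}$, and parabolic Schauder theory for the homogeneous-Neumann equations gives $N,T,I\in C^{2+\alpha,1+\alpha/2}(\overline{Q_{\tilde t}})$; in particular $N\in C^{1+\alpha,(1+\alpha)/2}$, so $vH(N-a_0)\in C^{1+\alpha,(1+\alpha)/2}(S\times(0,\tilde t))$. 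Together with the compatibility relation $d_4\nabla_\nu U_0=v(\cdot,0)H(N_0-a_0)$ in \eqref{compatibility_condition}, Schauder theory for the $U$-equation then yields $U\in C^{2+\alpha,1+\alpha/2}$, so the solution is classical.

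\emph{Step 4: uniqueness, and the main obstacle.} Let $(N_j,T_j,I_j,U_j)$, $j=1,2$, be two bounded nonnegative weak solutions, $\bar Z:=Z_1-Z_2$. By De Giorgi--Nash--Moser, $Z_j\in C^{\alpha,\alpha/2}(\overline{Q_{\tilde t}})$; combined with the $C^1$-dependence of $d_i$ on $z$ and the extra hypothesis that $(x,t)\mapsto d_i(x,t,z)$ is Hölder continuous, the leading coefficients $d_i(x,t,Z_j)$ become Hölder continuous, whence parabolic $L^p$-regularity (for the homogeneous-Neumann problems for $N_j,T_j,I_j$, and the inhomogeneous one for $U_j$, with $L^\infty$ data) gives $\nabla Z_j\in L^p(Q_{\tilde t})$ for every finite $p$. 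Subtracting the equations, writing
\[
d_i(x,t,Z_1)\nabla Z_1-d_i(x,t,Z_2)\nabla Z_2=d_i(x,t,Z_1)\nabla\bar Z+\bigl(d_i(x,t,Z_1)-d_i(x,t,Z_2)\bigr)\nabla Z_2,
\]
testing with $\bar Z$ and summing over $Z\in\{N,T,I,U\}$ gives
\[
\tfrac12\,\tfrac{d}{dt}\sum_{Z}\|\bar Z(t)\|_{L^2(\Omega)}^{2}+\delta_0\sum_{Z}\|\nabla\bar Z(t)\|_{L^2(\Omega)}^{2}\ \le\ \mathcal R(t),
\]
where $\mathcal R$ gathers: the quasilinear terms $\int_\Omega\bigl(d_i(x,t,Z_1)-d_i(x,t,Z_2)\bigr)\nabla Z_2\cdot\nabla\bar Z$, controlled via $|d_i(x,t,Z_1)-d_i(x,t,Z_2)|\le C|\bar Z|$, Hölder's inequality with $\nabla Z_2\in L^p$ ($p$ large), and the Gagliardo--Nirenberg inequality to absorb $\|\nabla\bar Z\|_{L^2}$ into the left side; the reaction differences, Lipschitz in the concentrations on the common $L^\infty$-range by the local Lipschitzness of $r_i$ and $k_2$; and the boundary-coupling term $\int_{S}v\bigl(H(N_1-a_0)-H(N_2-a_0)\bigr)\bar U\,\dH\le A_1\|H'\|_{L^\infty}\int_{S}|\bar N||\bar U|\,\dH$, absorbed by the trace interpolation inequality $\|w\|_{L^2(S)}^2\le\epsilon\|\nabla w\|_{L^2(\Omega)}^2+C_\epsilon\|w\|_{L^2(\Omega)}^2$. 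Since $\bar Z(\cdot,0)=0$, Gronwall's lemma forces $\bar Z\equiv 0$. The crux of the theorem is exactly this step: the term $\int(d_i(x,t,Z_1)-d_i(x,t,Z_2))\nabla Z_2\cdot\nabla\bar Z$ cannot be handled with $\nabla Z_2\in L^2$ alone, which is precisely why the Hölder-in-$(x,t)$ assumption on $d_i$ is imposed --- it powers the regularity bootstrap --- and pushing that bootstrap through the inhomogeneous Neumann condition for $U$, together with the boundary-coupling term, is the technical heart of the argument.
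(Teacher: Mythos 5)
Your existence and regularity steps are sound: the truncation-plus-Schauder fixed point construction, the sign argument for nonnegativity, the $M$-uniform bounds via the logistic structure and Lemma \ref{lem:Linfty}, and the De Giorgi--Nash--Moser/Schauder bootstrap all work and are a legitimate alternative to the paper's route (the paper instead regularizes the diffusion coefficients and damps the nonlinearities by $F_i/(1+\eps\sum_j|F_j|)$, gets the approximate problems by Galerkin, proves nonnegativity through an auxiliary positive-part system plus uniqueness of the approximate problem, and passes to the limit by Aubin--Lions); the a priori bounds themselves are essentially identical in both arguments.

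The gap is in Step 4. You assert that H\"older continuity of the coefficients yields $\nabla Z_j\in L^p(Q_{\tilde t})$ for every finite $p$, and the Gronwall closure hinges on this. But the solutions being compared are only bounded weak solutions with initial data in $L^{\infty}(\Omega)$ (assumption \eqref{H3}); De Giorgi--Nash--Moser gives H\"older continuity of $Z_j$, hence of $d_i(x,t,Z_j)$, only on $\Omega\times[\tau,\tilde t]$ for $\tau>0$, not up to $t=0$, and even with smooth coefficients one cannot have $\nabla Z_j\in L^p(\Omega\times(0,\tilde t))$ for large $p$ from merely bounded data: for the heat equation with discontinuous bounded $u_0$ one has $\|\nabla u(t)\|_{L^p}\sim t^{-1/2+1/(2p)}$ in 1D, so global-in-time $L^p$ integrability of the gradient fails for $p$ large. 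Tracking your own estimate, the quasilinear term forces a Gronwall coefficient of order $\|\nabla Z_2(t)\|_{L^p}^{2p/(p-n)}$, whose exponent exceeds $2$, and the available smoothing rates make this non-integrable at $t=0$; so the argument proves $\bar Z\equiv0$ only on $(\tau,\tilde t)$ given coincidence at $t=\tau$, and the initial layer is exactly what is not handled. The paper avoids this by invoking the well-posedness theory of \cite{RN} rather than an energy estimate. A secondary point: your Lipschitz control of the reaction differences uses local Lipschitz continuity of $k_2$ in $(N,T,I,U)$, which \eqref{H4} does not provide (it only gives boundedness and the lower bound $k_0$); any uniqueness proof, including the cited one, needs such an assumption made explicit.
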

	\begin{proof}
		Let $\eps>0$. We consider the following approximate system with $Z_\eps \in \{N_\eps, T_\eps, I_\eps, U_\eps\}$
		\begin{equation}\label{approx_sys}
			\begin{cases}
				\partial_t Z_{\eps} - \nabla\cdot(d_i^{\eps}(x,t,Z_\eps)\nabla Z_\eps) = F_i^{\eps}(N_\eps,T_\eps,I_\eps,U_\eps), &(x,t)\in Q_{\tilde{t}},\\
				\nabla N_\eps \cdot \nu = \nabla T_\eps \cdot \nu = \nabla I_\eps \cdot \nu = 0, &(x,t)\in S\times(0,\wt{t}),\\
				d_4^{\eps}\nabla U_\eps \cdot \nu = v(x,t)H(N_\eps - a_0), &(x,t)\in S\times(0,\wt{t}),\\
				Z_{\eps}(x,0) = Z_{\eps,0}(x), &x\in\Omega,
			\end{cases}
		\end{equation}
		where $d_i^{\eps}(x,t,\cdot)\in C^{\infty}(Q\times \mathbb R)$ satisfy
		\begin{equation}\label{approx_diff}
			\frac{\delta_0}{2} \le d_i^{\eps}(x,t,z) \le 2A_2, \qquad (x,t,z)\in Q_{\tilde{t}}\times \mathbb R_+, \; i=1,\ldots, 4,
		\end{equation}
		$$\sup_{z\in \mathbb R}\|d_i^{\eps}(\cdot,\cdot,z) - d_i(\cdot,\cdot,z)\|_{L^{\infty}(Q_{\tilde{t}})} \to 0\quad \text{and}\quad \sup_{(x,t)\in Q_{\tilde{t}}}\|d_i^{\eps}(x,t,\cdot) - d_i(x,t,\cdot)\|_{C^1(\mathbb R_+)}\to 0$$ as $\eps \to 0$, with $A_2 = \max_{i=1,\ldots, 4}\sup_{s\in\mathbb R_+}\|d_i(\cdot,\cdot,z)\|_{L^\infty(Q_{\tilde{t}})}$, the approximated nonlinearities given by
		\begin{equation*}
			F_i^{\eps}(N_\eps,T_\eps,I_\eps,U_\eps):= \frac{F_i(N_\eps,T_\eps,I_\eps,U_\eps)}{1 +\eps \sum_{j=1}^4|F_j(N_\eps,T_\eps,I_\eps,U_\eps)|},
		\end{equation*}
		and the initial data $0\le Z_{\eps,0}\in \left(C^{\infty}(\overline\Omega)\right)^4$ satisfies the compatibility \eqref{compatibility_condition} and approximates $Z_0$ in $\left(L^\infty(\Omega)\right)^4$. Since the diffusion coefficients are smooth and the approximated nonlinearities are bounded, the existence of a global weak solution to \eqref{approx_sys} can be obtained using the standard Galerkin method. Moreover, this weak solution is unique, see e.g. \cite{RN}. To show the non-negativity of the approximated solution, we consider the auxiliary system
		\begin{equation*}
			\partial_t Z_{\eps} - \nabla\cdot(d_i^{\eps}(x,t,Z_\eps)\nabla Z_\eps) = F_i^{\eps}(N_{\eps}^+,T_{\eps}^+,I_{\eps}^+,U_{\eps}^+)
		\end{equation*}
		with the same boundary conditions and initial data as in \eqref{approx_sys}, where $z^+ = \max\{z,0\}$. The global existence of a weak solution to this system follows the same way as for \eqref{approx_sys} since the nonlinearities still have all the properties as in \eqref{approx_sys}. By multiplying this equation by $Z_{\eps}^{-} = \min\{Z_{\eps},0\}$ and using the explicit forms of the nonlinearities in \eqref{def_F} which yield $F_i^{\eps}(N_{\eps}^+,T_{\eps}^+,I_{\eps}^+,U_{\eps}^+)Z_{\eps}^{-} \le 0$, we have
		\begin{equation*}
			\frac 12\frac{d}{dt}\|Z_{\eps}^-\|_{L^2(\Omega)}^2 + \delta_0\|\nabla Z_{\eps}^-\|_{L^2(\Omega)}^2 \le 0
		\end{equation*}
		and consequently
		\begin{equation*}
			\|Z_{\eps}^-(t)\|_{L^2(\Omega)}^2 \le \|Z_{\eps,0}^-\|_{L^{2}(\Omega)}^2.
		\end{equation*}
		This shows $Z_{\eps}(t)\ge 0$ for all $t>0$ thanks to the non-negativity of the initial data. Now by uniqueness we obtain that the global weak solution to \eqref{approx_sys} is non-negative.
		
		\medskip
		We now turn to uniform-in-$\eps$ estimates of solutions to \eqref{approx_sys}. Let $0<t_0<\tilde{t}$. From the equation of $U_\eps$ we have
		\begin{equation*}
			\begin{cases}
				\partial_t U_\eps - \nabla\cdot(d_4(x,t,U_\eps)\nabla U_\eps) \le -k_0 U_\eps, &x\in\Omega,\\
				d_4(x,t,U_\eps)\nabla U_\eps \cdot \eta = v(x,t)H(N_\eps - a_0), &x\in S,\\
				U_\eps(x,0) = U_{\eps,0}, &x\in\Omega.
			\end{cases}
		\end{equation*}
		By integrating over $\Omega$, we have
		\begin{equation*} 
			\frac{d}{dt}\int_{\Omega}U_\eps dx + k_0\int_{\Omega}U_\eps dx \le \int_{S}v(x,t)H(N_\eps - a_0)\dH \le C(H, |S|)\|v(t)\|_{L^\infty(S)},
		\end{equation*}
		which leads to
		\begin{equation*}
			\|U_\eps\|_{L^{\infty}(0,T;L^1(\Omega))} \le \|U_{\eps,0}\|_{L^1(\Omega)} + C(H,|S|)\|v\|_{L^\infty(S\times(0,t_0))}.
		\end{equation*}
		Now, we can apply Lemma \ref{lem:Linfty}, we have
		\begin{equation}\label{ULinf}
			\|U_\eps\|_{L^\infty(\Omega\times(0,t_0))} \le C(\|U_0\|_{L^\infty(\Omega)},H, |S|,\|v\|_{L^\infty(S\times(0,t_0))}).
		\end{equation}
		By using similar arguments for the equations of $N_{\eps}, T_{\eps}, I_{\eps}$ with the remark that all superlinear order terms have non-positive sign (see \eqref{def_F}), we get
		\begin{equation}\label{ZLinf}
			\|Z_{\eps}(t)\|_{L^{\infty}(\Omega)} \le C(\wt t) <+\infty \quad \forall t>0, \; Z\in \{N,T,I\}.
		\end{equation}
		By \eqref{ULinf} and \eqref{ZLinf}, $\|F_i(N_\eps,T_\eps,I_\eps,U_\eps)\|_{L^2(Q_{\tilde t})}\le C$, and standard arguments give
		\begin{equation*}
			\|Z_\eps\|_{V(Q)}\le C, \quad Z_\eps \in \{N_\eps,T_\eps,I_\eps,U_\eps\}.
		\end{equation*}
		By Aubin-Lions lemma, we can extract a subsequence, which is still denoted by $(N_\eps,T_\eps,I_\eps,U_\eps)$ such that
		\begin{equation*}
			Z_{\eps} \longrightarrow Z \text{ strongly in } L^2(Q_{\tilde{t}}) \text{ and weakly in } L^2(0,\tilde{t};H^1(\Omega)), \quad Z\in \{N,T,I,U\}.
		\end{equation*}
		For a smooth test function, we can then pass to the limit $\eps\to 0$ in the equations
		\begin{equation*}
			\iint_{Q_{\tilde{t}}}\partial_t Z_\eps \varphi dxdt + \iint_{Q_{\tilde{t}}}d_i^{\eps}(x,t,Z_{\eps})\nabla Z_{\eps}\cdot \nabla \varphi dxdt = \iint_{Q_{\tilde{t}}}F_i^\eps(N_\eps,T_\eps,I_\eps,U_\eps)\varphi dxdt
		\end{equation*}
		for $Z_\eps \in \{N_\eps,T_\eps,I_\eps\}$, $i=1,\ldots, 3$, and
		\begin{align*}
			\iint_{Q_{\tilde{t}}}\partial_t U_\eps\varphi dxdt + \iint_{Q_{\tilde{t}}}d_4^\eps(x,t,U_\eps)\nabla U_\eps\cdot \nabla \varphi dxdt \\
			= \iint_{S\times(0,\wt{t})}vH(N_\eps-a_0)\varphi d\mathcal{H}dt + \iint_{Q_{\tilde{t}}}F_4^\eps(N_\eps,T_\eps,I_\eps,U_\eps)\varphi dxdt,
		\end{align*}
		to finally conclude that $(N,T,I,U)$ is a global non-negative weak solution to \eqref{PDEsys_main}.
		
		\medskip
		If the diffusion coefficients are H\"older continuous in $x$ and $t$, then the uniqueness of the weak solution follows from \cite{RN}.
		
		\medskip
		The standard regularity theory for parabolic equations
		(\cite{Evans,Lieberman,LSU}) implies that if all functions and coefficients in \eqref{PDEsys_main} are smooth, and the initial data are smooth and satisfy the compatibility conditions \eqref{compatibility_condition}, then the solution is also classical.
	\end{proof}

	\subsection{Stability analysis}\label{sec4}
	In this subsection, we first investigate the uniform-in-time boundedness of the weak solution constructed in the previous subsection. Recall that from \eqref{H2} we know $v$ and $s$ are only bounded on each finite time interval. For the solutions to be bounded uniformly in time, we need that the constant $A_1$ in \eqref{H2} to be time independent, i.e.
	\begin{equation*}
		\|v\|_{L^{\infty}(S\times [0,\infty))} + \|s\|_{L^{\infty}(\Omega\times[0,\infty))}\le A_1.
	\end{equation*}
	
	\begin{proposition}\label{prop1}
		Assume \eqref{H1}--\eqref{H4} where $A_1$ in \eqref{H2} is time independent. Then the weak solution to \eqref{PDEsys_main} obtained in Theorem \ref{thm1} is bounded uniformly in time, i.e.
		\begin{equation*}
			\sup_{t\ge 0}\|Z(t)\|_{L^{\infty}(\Omega)} \le M < +\infty, \quad \forall Z\in \{N,I,T,U\}.
		\end{equation*}
	\end{proposition}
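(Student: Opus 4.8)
The plan is to promote uniform-in-time $L^1(\Omega)$ bounds on the four concentrations to uniform-in-time $L^\infty(\Omega)$ bounds by invoking Lemma~\ref{lem:Linfty}, whose constant $L$ is, by design, independent of the length $t_0$ of the time interval. As in the proof of Theorem~\ref{thm1}, all estimates below would first be derived on the approximate system \eqref{approx_sys}, uniformly in $\eps$ and in $t$, and then passed to the limit $\eps\to0$; I drop the subscript $\eps$.

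\emph{Step 1: uniform $L^1$ bounds.} Testing the $N$-, $T$- and $I$-equations against $\varphi\equiv1$, using the homogeneous Neumann conditions and the fact that every cross term in \eqref{def_F} (namely $-c_4TN$, $-c_3TN$, $-c_2IT$, $-c_1IT$, $-k_1I$, and each $-a_j(1-e^{-U})Z$) is nonpositive, I obtain for $y_Z(t):=\int_\Omega Z(t)\,dx$ a logistic differential inequality
\[
y_Z' \le a\,y_Z - r_0 b_i \int_\Omega Z^2\,dx + e, \qquad Z\in\{N,T,I\},
\]
with $a\in\{R_0,\,R_0+\rho\}$ and $e\in\{0,\,A_1|\Omega|\}$ (the constant $e=A_1|\Omega|$ occurs only for $I$, where I also use $\tfrac{\rho IT}{\alpha+T}\le\rho I$ and $s\le A_1$). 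The Cauchy--Schwarz bound $\int_\Omega Z^2\,dx\ge|\Omega|^{-1}y_Z^2$ turns this into $y_Z'\le a y_Z-\beta y_Z^2+e$ with $a,\beta>0$ and $e\ge0$ \emph{independent of $t$} --- this is precisely where the time-independence of $A_1$ is used --- so a standard barrier argument gives $\sup_{t\ge0}\|Z(t)\|_{L^1(\Omega)}\le C_0$ for $Z\in\{N,T,I\}$, with $C_0$ depending only on the data. For $U$, testing against $\varphi\equiv1$ and using $F_4=-k_2U\le-k_0U$ together with $0\le H\le1$ yields $y_U'\le A_1|S|-k_0 y_U$, hence also $\sup_{t\ge0}\|U(t)\|_{L^1(\Omega)}\le C_0$.

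\emph{Step 2: uniform $L^\infty$ bounds.} For $U$, since $F_4\le0$ one has $\partial_tU\le\nabla\cdot(d_4(x,t,U)\nabla U)$, while on $S$ the flux $vH(N-a_0)$ is bounded in $L^\infty(S\times(0,\infty))$ by $A_1$; thus \eqref{h1} holds with $a=b=c=0$ and $\|d\|_{L^\infty}\le A_1$. For $N$ and $T$, dropping the nonpositive terms and using $r_iZ(1-b_iZ)\le R_0Z$ gives \eqref{h1} with $a=R_0$, $b=c=d=0$ and homogeneous Neumann data. For $I$, the bounds $r_3I(1-b_3I)\le R_0I$, $\tfrac{\rho IT}{\alpha+T}\le\rho I$, $s\le A_1$ give \eqref{h1} with $a=R_0+\rho$, $b=A_1$, $c=d=0$. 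In every case the constants $a,b,c,d$ are time-independent and the relevant $L^\infty(0,t_0;L^1(\Omega))$-norm is $\le C_0$ by Step~1, so Lemma~\ref{lem:Linfty} provides $\|Z\|_{L^\infty(\Omega\times(0,t_0))}\le M_Z$ with $M_Z$ not depending on $t_0$. Taking $M:=\max\{M_N,M_T,M_I,M_U\}$ and letting $t_0\to\infty$ yields $\sup_{t\ge0}\|Z(t)\|_{L^\infty(\Omega)}\le M$ for all $Z\in\{N,T,I,U\}$.

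I do not expect a genuine obstacle. The two facts that make the argument run are that the superlinear reaction terms in \eqref{def_F} are $L^1$-dissipative --- so the total masses satisfy logistic ODEs whose coefficients become time-independent exactly when $A_1$ is --- and that Lemma~\ref{lem:Linfty} was engineered so that its $L^\infty$ estimate does not degenerate as $t_0\to\infty$. The only point needing mild care is the $I$-equation, where the source term $s$ and the Michaelis--Menten term $\tfrac{\rho IT}{\alpha+T}$ must be absorbed into $A_1|\Omega|$ and $\rho y_I$ so that the logistic structure survives; and, exactly as in Theorem~\ref{thm1}, these differential inequalities and test-function computations should be carried out on the smooth approximations first and then passed to the limit.
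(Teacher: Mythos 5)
Your argument is correct, and it reaches the uniform bound by a route that only partially coincides with the paper's. For $U$ the two proofs are the same: integrate the equation, use $F_4\le -k_0U$ and the bounded boundary flux to get a time-uniform $L^1$ bound, then invoke Lemma~\ref{lem:Linfty}. For $N$, $T$ and $I$, however, the paper does not pass through $L^1$ at all: it uses the pointwise logistic structure ($F_1\le R_0N-r_0b_1N^2$, and $F_3\le \widetilde A-\widetilde\lambda I$ after absorbing $s\le A_1$ and $\rho IT/(\alpha+T)\le\rho I$) together with the comparison principle against spatially homogeneous ODE supersolutions, which yields the explicit asymptotic bounds $1/b_1$, $1/b_2$ directly. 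You instead convert the same sign structure into logistic differential inequalities for the masses $y_Z(t)$ (via $\varphi\equiv1$ and Cauchy--Schwarz) and then promote the uniform $L^1$ bounds to $L^\infty$ through Lemma~\ref{lem:Linfty} with $a=R_0$ (resp. $R_0+\rho$, $b=A_1$ for $I$). That buys uniformity in the lemma's constants without ever invoking a comparison principle for the quasilinear equations with $d_i=d_i(x,t,Z)$, at the price of losing the sharp constants and being slightly heavier. One small caution: you propose to run these estimates on the approximate system \eqref{approx_sys} uniformly in $\eps$, but the truncated nonlinearities $F_i^{\eps}$ do not retain the full quadratic dissipation $-r_0b_iZ^2$ uniformly in $\eps$, so the mass inequality in your Step~1 is cleanest if derived directly for the limiting weak solution (taking $\varphi\equiv1$ in the weak formulation, and applying Lemma~\ref{lem:Linfty} to the weak solution itself, exactly as the paper does for $U$); with that adjustment the proof goes through as written.
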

	\begin{proof}
		By the comparison principle, thanks to the non-negativity of solutions, we immediately have
		\begin{equation*} 
			\limsup_{t\to\infty}\|N(t)\|_{L^\infty(\Omega)} \le \frac{1}{b_1}, \quad \text{ and } \quad \limsup_{t\to\infty}\|T(t)\|_{L^\infty(\Omega)} \le \frac{1}{b_2}.
		\end{equation*}
		For the bound of $U$, we first show that $\|U\|_{L^\infty(0,t_0;L^1(\Omega))}$ is bounded uniformly in $t_0$. By integrating the equation of $U$ in $\Omega$, we get
		\begin{equation*} 
			\frac{d}{dt}\int_{\Omega}U(x,t)dx + k_0\int_{\Omega}U(x,t)dx = \int_{S}v(x,t)H(N-a_0)dx \le C|S|\|v(t)\|_{L^\infty(S)}.
		\end{equation*}
		The classical Gronwall inequality gives
		\begin{equation*} 
			\int_{\Omega}U(x,t)dx \le e^{-k_0 t}\int_{\Omega}U_0(x)dx + C|S|\sup_{t\ge 0}\|v(t)\|_{L^{\infty}(S)}\frac{1}{k_0}
		\end{equation*}
		which is the desired bound of $U$. Now by applying Lemma \ref{lem:Linfty}, we obtain
		\begin{equation*} 
			\limsup_{t\to\infty}\|U(t)\|_{L^\infty(\Omega)} < +\infty.
		\end{equation*}
		%		
		%		\medskip
		%		Thanks to Young's inequality, assumption \eqref{H2} with $A_2$ being independent of time, and \eqref{e5}, we can estimate
		%		\begin{equation*}
			%			\int_{S}vH(N-a_0)U^{p-1}d\mathcal{H} \le \frac{\delta_0}{2}\int_{\Omega}|U|^{p-2}|\nabla U|^2dx + \frac{k_0}{2}\int_{\Omega}|U|^pdx + C(A_1,k_0,\delta_0,\delta).
			%		\end{equation*}
		%		Therefore
		%		\begin{equation*}
			%			\frac{d}{dt}\|U\|_{L^p(\Omega)}^p + \frac{k_0p}{2}\|U\|_{L^p(\Omega)}^p \le Cp,
			%		\end{equation*}
		%		and consequently Gronwall's lemma implies
		%		\begin{equation*}
			%			\|U(t)\|_{L^p(\Omega)}^p \le e^{-\frac{k_0p}{2}t}\|U_0\|_{L^p(\Omega)}^p + \frac{2C}{k_0}.
			%		\end{equation*}
		%		Taking the root of order $p$ of both sides and letting $p\to \infty$ we get
		%		\begin{equation*}
			%			\sup_{t\ge 0}\|U(t)\|_{L^{\infty}(\Omega)} < +\infty.
			%		\end{equation*}}
	For the boundedness of $I$, we first observe from \eqref{def_F}, \eqref{H1} and \eqref{H2} that 
	\begin{equation*}
		F_3(N,T,I,U)\le R_0I - r_0b_3I^2 + A_1 + \rho I \le \lambda_1I - \lambda_2 I^2 + A_1 \le \widetilde{A} - \widetilde{\lambda}I
	\end{equation*}
	for some $\lambda_1, \lambda_2, \widetilde{A}, \widetilde{\lambda} > 0$. By the comparison principle 
	\begin{equation*} 
		\limsup_{t\to\infty}\|I(t)\|_{L^{\infty}(\Omega)} < +\infty
	\end{equation*}
	which finishes the proof of the Proposition.
\end{proof}

\begin{corollary}\label{corollary}
	Under assumptions \eqref{H1}--\eqref{H4} where $A_1$ in \eqref{H2} is time independent, there exists a maximal attractor $\mathcal{A}$ in $L^{\infty}_+(\Omega)$ for the semigroup $\{S(t)\}_{t\ge 0}$ defined by $S(t): (N_0, T_0, I_0,U_0) \mapsto  (N(t), T(t), I(t),U(t))$ where  $(N,T,I,U)$ is the solution to the system \eqref{PDEsys_main}--\eqref{BC_IC}.
\end{corollary}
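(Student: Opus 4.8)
The plan is to verify the three standard ingredients for the existence of a global attractor of a dissipative semigroup and then invoke the classical abstract theorem. Throughout one works in the metric space $X:=L^\infty_+(\Omega)$ with the $L^\infty$ distance, assuming (as in the uniqueness part of Theorem \ref{thm1}) that each $d_i(\cdot,\cdot,z)$ is H\"older continuous in $(x,t)$, so that the weak solution is unique and $S(t):(N_0,T_0,I_0,U_0)\mapsto(N(t),T(t),I(t),U(t))$ is single-valued on $X$. The semigroup identities $S(0)=\mathrm{id}$ and $S(t+s)=S(t)S(s)$ then follow from uniqueness, and since every weak solution satisfies $Z\in V_2(Q_{\tilde t})$ with $\partial_t Z\in L^2(0,\tilde t;(H^1(\Omega))^*)$, it has a representative in $C([0,\tilde t];L^2(\Omega))$, so $t\mapsto S(t)u_0\in (L^2(\Omega))^4$ is continuous. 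Continuous dependence on the initial datum in $L^2$, uniform on bounded subsets of $X$, follows from the Gronwall estimate for the difference of two solutions that underlies the uniqueness argument of \cite{RN}, the uniform $L^\infty$ bounds being used to control the locally Lipschitz reaction terms and the quasilinear diffusion terms.

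Next I would produce a bounded absorbing set. Inspecting the proof of Proposition \ref{prop1} (with $A_1$ time-independent), the $\limsup_{t\to\infty}$ bounds obtained there are \emph{independent of the initial data}: $\limsup_t\|N(t)\|_{L^\infty}\le 1/b_1$, $\limsup_t\|T(t)\|_{L^\infty}\le 1/b_2$, and the corresponding bounds for $U$ and $I$ depend only on the structural constants and on $A_1$. Hence there is $R>0$ such that $\mathcal B:=\{u\in X:\|u\|_{(L^\infty(\Omega))^4}\le R\}$ is absorbing -- for every bounded $B\subset X$ there is $t_B\ge 0$ with $S(t)B\subset\mathcal B$ for all $t\ge t_B$ -- and, enlarging $R$ if necessary, positively invariant: $S(t)\mathcal B\subset\mathcal B$ for all $t\ge 0$.

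The decisive step is compact smoothing. Fix $t_0\ge 1$. On the slab $\Omega\times[t_0-1,t_0]$ each of $N,T,I,U$ solves, by itself, a quasilinear parabolic equation with uniformly elliptic, bounded measurable coefficients, bounded right-hand side, and homogeneous Neumann (respectively Robin-type, for $U$) boundary data; because $A_1$ is time-independent and the solution is bounded uniformly in $t$, all these bounds are independent of $t_0$. The De Giorgi--Nash--Moser theory together with its up-to-the-boundary version (\cite{Lieberman,LSU}) then gives a H\"older estimate
\[ \|S(t_0)u_0\|_{(C^{\alpha}(\bar\Omega))^4}\le K,\qquad u_0\in\mathcal B,\ t_0\ge 1, \]
for some $\alpha\in(0,1)$ and $K>0$ independent of $u_0$ and $t_0$. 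Since $C^{\alpha}(\bar\Omega)\hookrightarrow\hookrightarrow C(\bar\Omega)\hookrightarrow L^\infty(\Omega)$, the set $S(t_0)\mathcal B$ is precompact in $X$; thus $S(t_0)$ is a compact map $\mathcal B\to X$ for every $t_0\ge 1$ (hence, by the semigroup property, for every $t_0>0$), and in particular $\{S(t)\}_{t\ge0}$ is asymptotically compact.

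With a continuous semigroup, a bounded absorbing set $\mathcal B$, and eventual compactness of $S(t)$ in hand, the classical existence theorem for global attractors (Temam, Hale) applies and yields that
\[ \mathcal A:=\omega(\mathcal B)=\bigcap_{\tau\ge0}\ \overline{\bigcup_{t\ge\tau}S(t)\mathcal B} \]
is nonempty, compact in $X$, fully invariant ($S(t)\mathcal A=\mathcal A$ for all $t\ge0$), and attracts every bounded subset of $X$; maximality is automatic, since any bounded set $Y\subset X$ with $S(t)Y=Y$ is attracted by $\mathcal A$ and therefore contained in it. As a by-product $\mathcal A=S(1)\mathcal A\subset S(1)\mathcal B$ lies in $(C^{\alpha}(\bar\Omega))^4$, so the attractor consists of H\"older functions. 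I expect the main obstacle to be the smoothing step: one needs compactness in the $L^\infty$, not merely the $L^2$, topology -- hence H\"older rather than $H^1$ bounds -- and one needs it uniformly in time for the quasilinear system with only bounded-measurable-in-$(x,t)$ diffusion coefficients and the state-dependent Robin condition for $U$; the time-independence of $A_1$ in Proposition \ref{prop1} is exactly what secures this uniformity. A minor additional subtlety is that $X=L^\infty_+(\Omega)$ is non-reflexive, so one should check that $L^2$-continuity of the semigroup together with $L^\infty$-compact smoothing suffices to run the construction directly in $X$ -- or, equivalently, first build the attractor in $L^2_+(\Omega)$ and then upgrade its regularity using the smoothing estimate above.
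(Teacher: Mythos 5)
Your proposal is correct and follows essentially the same route as the paper: the uniform-in-time $L^\infty$ bounds of Proposition \ref{prop1} are upgraded to uniform H\"older bounds via De Giorgi--Nash--Moser estimates on unit time intervals (the paper uses a temporal cut-off $\varphi_\tau$ rather than your slab argument, to the same effect), and then the compact embedding $C^{\alpha}(\bar\Omega)\hookrightarrow L^{\infty}_+(\Omega)$ together with the standard abstract attractor theorem of Temam gives the maximal attractor. Your additional care about single-valuedness of $S(t)$ (requiring the H\"older continuity of $d_i$ in $(x,t)$ for uniqueness), the absorbing set, and the non-reflexivity of $L^\infty$ fills in details the paper leaves implicit, but it is not a different method.
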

\begin{proof}
	It remains to show that the weak solution to \eqref{PDEsys_main} is bounded in $C^{\alpha}(\Omega)$ uniformly in time. For $\tau\ge 0$, we define a smooth cut-off function $\varphi_\tau: \mathbb R \to [0,1]$ such that $\varphi_\tau|_{(-\infty, \tau]} = 0$, $\varphi_\tau|_{[\tau+1,\infty)} = 1$, and $|\varphi_\tau'| \le C$ for $C$ independent of $\tau$. From the equations of \eqref{PDEsys_main}, we get for $Z\in \{N,T,I,U\}$,
	\begin{equation*} 
		\begin{cases}
			\partial_t(\varphi_\tau Z) - \nabla \cdot(d_i\nabla Z) = \varphi_\tau' Z + \varphi_\tau F_i(N,T,I,U), &x\in \Omega, t>\tau,\\
			\nabla_{\nu}(d_i\varphi_\tau Z)= 0, &x\in S, t>\tau, Z\in \{N,T,I\}\\
			\nabla_{\nu}(d_U\varphi_\tau U) = vH(a_0 - N), &x\in S, t>\tau,\\
			(\varphi_\tau Z)(x,\tau) = 0, &x\in\Omega.
		\end{cases}
	\end{equation*}
	
	Since the right hand sides of all equations are bounded uniformly-in-time in the $L^{\infty}$-norm, one can use the De Giorgi-Nash-Moser theory to get (see e.g. \cite{DB1993} Remark 1.1 page 17 and Theorem 1.3 page 78) for any $Z\in \{N,T,I,U\}$,
	\begin{equation*}
		\sup_{t\in [\tau,\tau+1]}\|Z(t)\|_{C^{\alpha}(\bar{\Omega})} \le C\|\varphi_\tau'Z + F_i(N,I,T,U)\|_{L^{\infty}(\Omega\times(\tau,\tau+1))} \quad \le C\quad  \forall \tau \ge 1,
	\end{equation*}
	where the constant $C$ is independent of $\tau$. Therefore,
	\begin{equation*}
		\limsup_{t\to\infty}\|Z(t)\|_{C^{\alpha}(\bar\Omega)} < +\infty, \quad Z\in \{N,I,T,U\}.
	\end{equation*}
	Thanks to the compact embedding $C^{\alpha}(\bar\Omega)\hookrightarrow L^{\infty}_+(\Omega)$, we obtain the existence of a maximal attractor in $L^{\infty}_+(\Omega)$, see e.g. \cite{Temam1988}.
\end{proof}

Finally, we investigate in more detail the large time dynamics of the system when the injection rate decays and for each interval of treatment, the reproduction rate of tumor cells is small for certain amount of time. More precisely, we assume the following.

\begin{enumerate}[label=(\textbf{H\theenumi}),ref=\textbf{H\theenumi}]
	\setcounter{enumi}{4}
	\item\label{H5} It is assumed that
	\begin{equation*}
		\lim_{t\to\infty}\|v(t)\|_{L^{\infty}(S)} = 0, \quad \inf_{(x,t)}s(x,t)\ge \beta > 0,
	\end{equation*}
	and there are positive constants $R_0$, $\mathsf{L}$, $\xi\in (0,\L)$, $K_0\in \mathbb N$ such that for all $j\ge K_0$,
	\begin{equation*}
		\sup_{x\in\Omega,\mathbf Z\in \mathbb R_+^4}|r_2(x,t,\mathbf Z)|\le R_0 \quad \forall t\in (j\L - \xi, j\L).
	\end{equation*}
\end{enumerate}

\medskip
The last condition in \eqref{H5} means that for each interval of treatment $((j-1)\L, j\L)$, the reproduction rate is required to be small only on an interval of size $\xi\in (0,\L)$ which is $(j\L - \xi, j\L)$. This might be explained as the drug takes a certain time, namely the time interval $((j-1)\L, j\L-\xi)$, to start showing effect on the reproduction rate of tumor cells. Consequently, the number of tumor cells can grow, at most linearly (see Theorem \ref{thm:decay_tumor}), on the interval $((j-1)\L, j\L - \xi)$, but eventually decays to zero as $t\to\infty$. This is in fact consistent with Jeff's phenomenon (\cite{DR2001}) mentioned in the introduction, see Figure \ref{Fig1}.

\begin{figure}[ht!]
	%		\begin{center}
		\includegraphics[width=18cm]{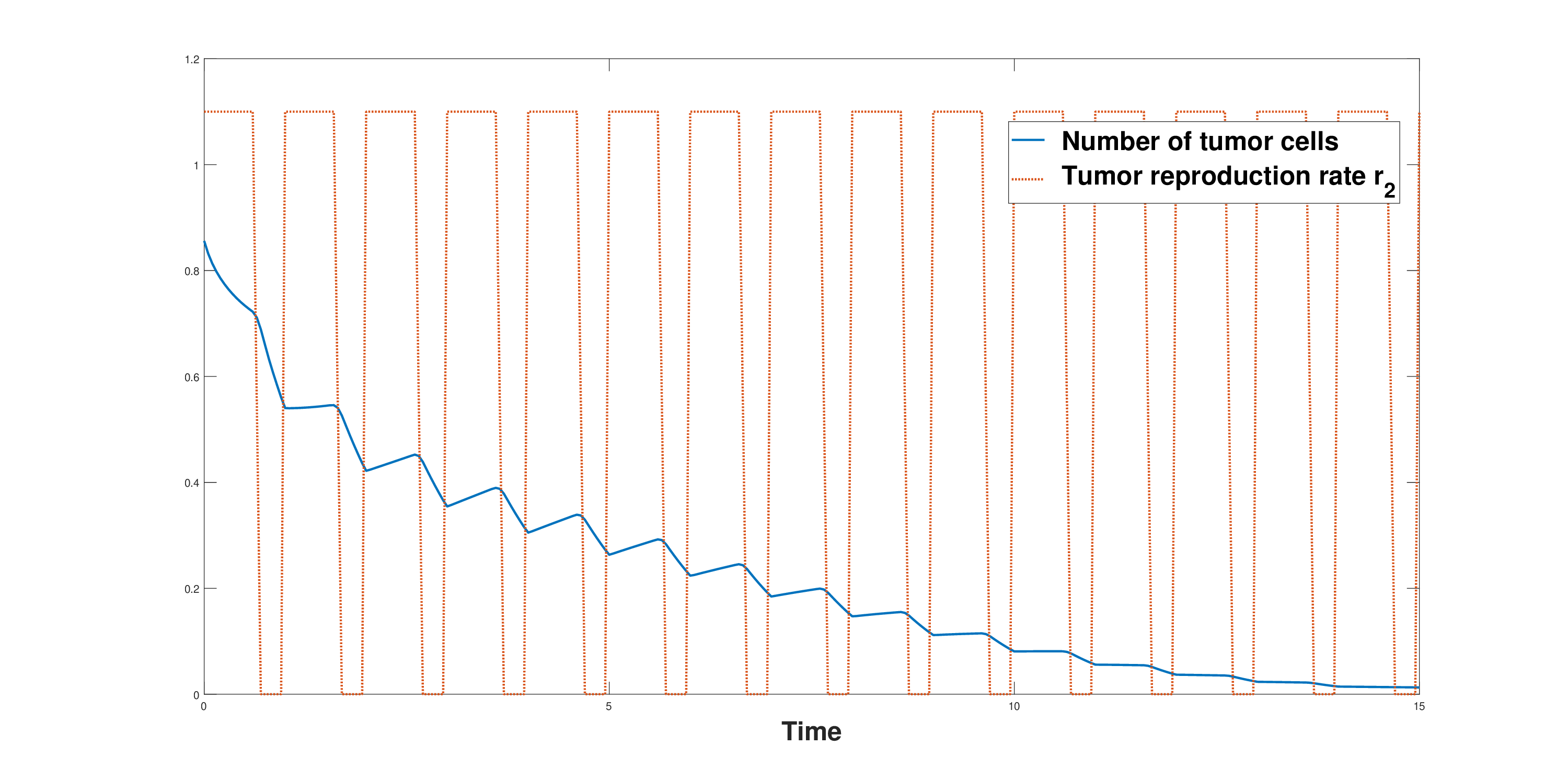}
		%		\end{center}
	\caption{The behavior of number of tumor cells, i.e. the $L^1$-norm of $T$, (the blue dashed line) against its reproduction rate $r_2$ (the red solid line) in the time interval $[0,15]$. Here we choose the treatment interval $\L = 1$. The growth rate $r_2(x,t) \equiv r_2(t)$ depends continuous on time and is defined by: for each natural number $k\in \mathbb N$, $r(t) = 1.1$ for $k\le t\le k+0.6$, $r(t) = 10^{-4}$ for $k+0.7\le t\le k+1$, and $r(t)$ is linear on $(k+0.6, k+0.7)$. Here we notice the Jeff's phenomenon: the number of tumor cells grows in some intervals where the reproduction rate is large, but decays where the reproduction rate is small, and eventually the number of tumors decreases to $0$.}
	\label{Fig1}
\end{figure}
\medskip
\begin{lemma}
	Assume \eqref{H1}--\eqref{H5} with $A_1$ in \eqref{H2} is time independent. Then for any $1\le p <\infty$
	\begin{equation*}
		\lim_{t\to\infty}\|U(t)\|_{L^{p}(\Omega)} = 0.
	\end{equation*}
\end{lemma}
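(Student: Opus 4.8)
The plan is to reduce the claim to an $L^1$-estimate and then interpolate against the uniform $L^\infty$-bound. Indeed, by Proposition \ref{prop1} we have $\sup_{t\ge 0}\|U(t)\|_{L^\infty(\Omega)}\le M<\infty$, and since $U\ge 0$, for every $1\le p<\infty$
\[
\|U(t)\|_{L^p(\Omega)}^p=\int_\Omega U(x,t)^p\,dx\le \|U(t)\|_{L^\infty(\Omega)}^{p-1}\,\|U(t)\|_{L^1(\Omega)}\le M^{p-1}\,\|U(t)\|_{L^1(\Omega)},
\]
so it suffices to prove $\|U(t)\|_{L^1(\Omega)}\to 0$ as $t\to\infty$.

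To obtain this, I would test the weak formulation of the $U$-equation with the constant function $\varphi\equiv 1$ (equivalently, integrate the equation over $\Omega$ and use the divergence theorem together with the boundary condition $d_4\nabla_\nu U=vH(N-a_0)$ on $S$). Using $F_4=-k_2U$ with $k_2\ge k_0>0$ from \eqref{H4}, and $0\le H\le 1$, this gives for $y(t):=\int_\Omega U(x,t)\,dx$ the differential inequality
\[
y'(t)\le -k_0\,y(t)+\int_S v(x,t)H(N-a_0)\,\dH\le -k_0\,y(t)+|S|\,\|v(t)\|_{L^\infty(S)}
\]
for a.e. $t>0$, which makes sense since $\partial_t U\in L^2(0,\tilde{t};(H^1(\Omega))^*)$ and $1\in H^1(\Omega)$.

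Now set $g(t):=|S|\,\|v(t)\|_{L^\infty(S)}$, which is bounded (by $|S|A_1$) and satisfies $g(t)\to 0$ by \eqref{H5}. Gronwall's inequality yields $y(t)\le e^{-k_0 t}y(0)+\int_0^t e^{-k_0(t-s)}g(s)\,ds$, and the right-hand side tends to $0$: given $\z>0$, pick $t_1$ with $g(s)<\z k_0$ for $s\ge t_1$ and split the integral at $t_1$; the tail $\int_{t_1}^t e^{-k_0(t-s)}g(s)\,ds\le \z$, while $\int_0^{t_1}e^{-k_0(t-s)}g(s)\,ds\le e^{-k_0(t-t_1)}\int_0^{t_1}g(s)\,ds\to 0$ as $t\to\infty$. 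Hence $\limsup_{t\to\infty}y(t)\le \z$ for every $\z>0$, so $\|U(t)\|_{L^1(\Omega)}=y(t)\to 0$, and the conclusion follows from the interpolation inequality above.

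There is no real obstacle here: the only slightly delicate point is the rigorous justification of the $L^1$ differential inequality at the level of weak solutions (legitimacy of $\varphi\equiv 1$ as test function and absolute continuity of $t\mapsto\int_\Omega U(t)$), after which the argument is just the elementary "linear scalar ODE with exponential decay and vanishing forcing" lemma. No nonlinear structure of the system is used beyond the dissipative sign $-k_2U\le -k_0U$ and the bound $H\le 1$.
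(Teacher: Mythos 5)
Your proof is correct, but it takes a different route from the paper. The paper works directly at the level of $L^p$: it multiplies the $U$-equation by $U^{p-1}$ and, handling the boundary injection term $\int_S vH(N-a_0)U^{p-1}\,\dH$ by arguments as in Theorem \ref{thm1}, derives a scalar inequality of the form $\frac{d}{dt}\|U\|_{L^p(\Omega)}^p \le -Cp\bigl(k_0-\|v(t)\|_{L^{\infty}(S)}\bigr)\|U\|_{L^p(\Omega)}^p + C\|v(t)\|_{L^{\infty}(S)}$, and then concludes with Gronwall and the decay of $v$ from \eqref{H5}. You instead reduce everything to $p=1$: the identity you obtain by testing with $\varphi\equiv 1$ is exactly the one already used in the proof of Proposition \ref{prop1}, and the same "dissipation $k_0$ plus vanishing forcing" Gronwall argument gives $\|U(t)\|_{L^1(\Omega)}\to 0$; the general case then follows by interpolating against the uniform-in-time bound $\sup_{t\ge 0}\|U(t)\|_{L^\infty(\Omega)}\le M$ of Proposition \ref{prop1}, which is available since its hypotheses (\eqref{H1}--\eqref{H4} with time-independent $A_1$) are contained in those of the lemma. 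What your version buys is that you never need trace estimates to absorb the boundary term against $U^{p-1}$ (the source of the $k_0-\|v\|_{L^\infty(S)}$ correction in the paper's inequality), so the argument is more elementary; the price is the explicit reliance on the uniform $L^\infty$ bound, whereas the paper's $L^p$ energy estimate is self-contained at the level of the $U$-equation and yields the decay for each $p$ directly. Both endgames are the same elementary Gronwall lemma with vanishing forcing, so the proposal is a valid, slightly simpler alternative proof.
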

\begin{proof}
	By multiplying the equation of $U$ in \eqref{PDEsys_main} by $U^{p-1}$, $p\ge 2$, and using similar arguments as in Theorem \ref{thm1} we obtain
	\begin{equation*}
		\frac{d}{dt}\|U\|_{L^p(\Omega)}^p \le -Cp(k_0-\|v(t)\|_{L^{\infty}(S)})\|U\|_{L^p(\Omega)}^p + C\|v\|_{L^{\infty}(S)}.
	\end{equation*}
	By using Gronwall's lemma,then applying the decay of $v$ in \eqref{H5} we obtain finally
	\begin{equation*}
		\lim_{t\to\infty}\|U(t)\|_{L^{p}(\Omega)} = 0.
	\end{equation*}
\end{proof}

Finally, we show that the if the reproduction rate of the tumor cells is eventually small then they will decay to zero in large time.
\begin{theorem}\label{thm:decay_tumor}
	Assume \eqref{H1}--\eqref{H5} with $A_1$ in \eqref{H2} time independent. Assume moreover that $\inf_{\Omega}I_0(x)>0$. Then there are positive constants $R_*$ and $\tau_* = \tau_*(R_*)$ such that if $R_0$ in \eqref{H5} satisfies $R_0<R_*$, then 
	\begin{equation*}
		\|T(t)\|_{L^{\infty}(\Omega)}\le Ce^{-\kappa t}, \quad \forall t\ge \tau_*
	\end{equation*}
	for some constants $C, \kappa>0$.
\end{theorem}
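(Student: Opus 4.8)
The plan is to obtain the decay of $T$ by a scalar comparison argument on the second equation of \eqref{PDEsys_main}, the essential preliminary step being a strictly positive, time-independent lower bound for the immune cells $I$. \emph{Step 1 (uniform lower bound for $I$).} By Proposition \ref{prop1} the quantities $N,T,U$ are bounded on $\Omega\times(0,\infty)$; write $\overline{R}$ for the upper bound of $r_2$ furnished by \eqref{H1} and $M_T$ for the bound of $T$. Using the explicit form of $F_3$ in \eqref{def_F} together with \eqref{H1}, \eqref{H4} and the lower bound $s\ge\beta>0$ from \eqref{H5}, one gets for every $(x,t)$ and every $z\ge0$
\[
F_3(N,T,z,U)\ \ge\ \beta+z\,(r_0-c_1M_T-k_1-a_1)-\overline{R}\,b_3\,z^2\ =:\ g(z),
\]
since $r_3z(1-b_3z)\ge r_0z-\overline{R}b_3z^2$, $\tfrac{\rho zT}{\alpha+T}\ge0$ and $-a_1(1-e^{-U})z\ge-a_1z$. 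The downward parabola $g$ has $g(0)=\beta>0$, hence a unique positive root $y_+>0$ with $g>0$ on $(0,y_+)$. Since $\inf_\Omega I_0>0$, a crude comparison (from $I_t\ge\nabla\cdot(d_3\nabla I)-CI$ for a suitable $C>0$, using the spatially constant subsolution $t\mapsto(\inf_\Omega I_0)e^{-Ct}$) shows $I>0$ at every finite time; then the weak comparison principle for the quasilinear problem $w_t=\nabla\cdot(d_3(x,t,w)\nabla w)+g(w)$ with homogeneous Neumann data, applied with the $x$-independent subsolution solving $\dot y=g(y)$, yields $\inf_\Omega I(\cdot,t)\ge y(t)\to y_+$. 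Refining by replacing $M_T$ with $\tfrac1{b_2}+\epsilon$ for large $t$ (again Proposition \ref{prop1}) and letting $\epsilon\downarrow0$ produces a constant $\gamma>0$ and a time $\tau_1$ such that $\inf_\Omega I(\cdot,t)\ge\gamma$ for all $t\ge\tau_1$.

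\emph{Step 2 (tumor equation and bookkeeping over treatment periods).} For $t\ge\tau_1$, discarding the non-positive terms $-c_3TN$, $-a_2(1-e^{-U})T$ and $-r_2b_2T^2$ in $F_2$ gives
\[
T_t\ \le\ \nabla\cdot(d_2(x,t,T)\nabla T)+\big(r_2(x,t)-c_2\gamma\big)T,\qquad \nabla_\nu T=0\ \text{on }S,
\]
and comparison with the $x$-independent supersolution $\overline T(t)=\|T(\tau_1)\|_{L^\infty}\exp\!\big(\int_{\tau_1}^{t}\sup_x(r_2(x,s)-c_2\gamma)\,ds\big)$ (the diffusion term vanishes on $\overline T$) gives
\[
\|T(t)\|_{L^\infty(\Omega)}\ \le\ \|T(\tau_1)\|_{L^\infty(\Omega)}\,\exp\!\Big(\int_{\tau_1}^{t}\sup_{x\in\Omega}\big(r_2(x,s)-c_2\gamma\big)\,ds\Big).
\]
On each treatment period $((j-1)\L,j\L)$ with $j\ge K_0$ I bound the integrand by $\overline{R}-c_2\gamma$ on the part $((j-1)\L,j\L-\xi)$ of length $\L-\xi$ and by $R_0-c_2\gamma$ on $(j\L-\xi,j\L)$ using \eqref{H5}, so the contribution of one such period is at most $(\L-\xi)\overline{R}+\xi R_0-\L c_2\gamma$. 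Choosing $R_*:=\tfrac1{2\xi}\big(\L c_2\gamma-(\L-\xi)\overline{R}\big)$, the hypothesis $R_0<R_*$ makes this $\le-\eta_0$ for some fixed $\eta_0>0$. Summing over the full periods inside $[\tau_1,t]$ and absorbing the bounded contributions of the initial partial period and of the finitely many periods with index $<K_0$ into a constant, one obtains $\int_{\tau_1}^{t}\sup_x(r_2-c_2\gamma)\,ds\le C-\kappa t$ for all $t\ge\tau_*:=\max\{\tau_1,K_0\L\}$, with $\kappa=\eta_0/\L>0$; this is the asserted estimate $\|T(t)\|_{L^\infty}\le Ce^{-\kappa t}$.

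\emph{Main difficulty and remarks.} The heart of the proof is Step 1: producing a \emph{quantitative, time-independent} positive floor for $I$. This relies on the favorable sign structure of $F_3$ (constant influx $s\ge\beta$, the only superlinear term being the dissipative $-r_3b_3I^2$) combined with the a priori $L^\infty$-bounds of Proposition \ref{prop1}, and it requires the comparison principle for the quasilinear diffusion to be invoked with care, including the strict positivity of $I$ at finite times so the ODE comparison can be initialized from $\inf_\Omega I(\cdot,\tau_1)>0$. A second point to keep in mind is that the period-by-period estimate only yields decay when the net exponent over one period is negative, which is exactly why $R_*$ must be taken small enough that $(\L-\xi)\overline{R}+\xi R_0<\L c_2\gamma$; implicitly this requires the length $\L-\xi$ of the ``ineffective'' sub-interval to be compatible with the immune kill rate $c_2\gamma$. (The decay $\|U(t)\|_{L^p}\to0$ from the previous lemma is not needed here beyond the mere boundedness of $U$, though it could be used to replace $-a_1(1-e^{-U})I$ by $-o(1)I$ and thereby enlarge $\gamma$.)
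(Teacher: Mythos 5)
Your Step 1 is essentially the paper's own argument: the paper also obtains the uniform floor for $I$ by comparison with a spatially constant ODE subsolution driven by $s\ge\beta$ (it uses the linear ODE $y'=\beta-Ky$ with $K=\|r_3b_3I+c_1T+k_1\|_{L^\infty}$ and $y(0)=0$, so it does not even need $\inf_\Omega I_0>0$); your logistic-type $g$ is a harmless variant, and the comparison bookkeeping there is fine.

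The problem is in Step 2, and it is precisely the point you flag in your closing remarks: your accounting is purely multiplicative, so each ineffective window $((j-1)\L,\,j\L-\xi)$ contributes a factor $e^{(\overline R-c_2\gamma)(\L-\xi)}$, and your $R_*=\frac{1}{2\xi}\bigl(\L c_2\gamma-(\L-\xi)\overline R\bigr)$ is positive only if $(\L-\xi)\overline R<\L c_2\gamma$. Nothing in \eqref{H1}--\eqref{H5} guarantees this: $\overline R$ is the global \eqref{H1} bound on $r_2$ and $\gamma$ is whatever Step 1 produces, so for $\xi$ small relative to $\L$, or $\overline R$ large, your $R_*$ is $\le 0$ and the argument produces no admissible $R_0$ at all, whereas the theorem asserts the existence of a \emph{positive} $R_*$ under \eqref{H1}--\eqref{H5} alone. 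The paper avoids the exponential amplification on the ineffective window by invoking the uniform-in-time bound $\|T\|_{L^\infty}\le\omega$ there: it estimates $r_2T\le\omega\overline R$, so that window contributes only the additive increment $\omega\overline R(\L-\xi)$, and the kill window with threshold $R_*:=c_2\gamma$ then gives the recursion $\|T(j\L)\|_{L^\infty}\le e^{-\delta_*\xi}\|T((j-1)\L)\|_{L^\infty}+e^{-\delta_*\xi}\omega\overline R(\L-\xi)$. (Admittedly the paper closes this recursion only ``for $\L$ large and $\xi$ sufficiently close to $\L$,'' so a compatibility condition between $\L,\xi,\overline R,c_2\gamma$ is implicitly present there too; but the additive structure is what makes its $R_*$ positive unconditionally, and that is the ingredient your estimate is missing.) To repair your proof along your own lines you must either add the hypothesis $(\L-\xi)\overline R<\L c_2\gamma$ to the statement, or replace the exponential bound on the ineffective window by the paper's additive bound based on $\|T\|_{L^\infty}\le\omega$.
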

\begin{proof}
	From the equation of $I$ in \eqref{PDEsys_main} and \eqref{H5},
	\begin{equation*}
		\partial_t I - \nabla\cdot(d_3(x,t,I)\nabla I) + (r_3b_3I + c_1T + k_1)I = s(x,t) + \frac{\rho IT}{\alpha + T} \ge \beta
	\end{equation*}
	for all $(x,t)\in \Omega\times(0,+\infty)$. Note that $ r_3b_3I + c_1T + k_1$ is bounded uniformly in time and space. Let $d(x,t) = d_3(x,t,I)$, $R(x,t) = r_3b_3I + c_1T + k_1$, $G(x,t) = s(x,t) + \rho IT/(\alpha+T)\ge \beta > 0$, and $K = \|R\|_{L^\infty(Q)}$. Suppose $y(t)$ solves
	\begin{equation*} 
		y'(t) = \beta - Ky(t), \quad y(0) = 0.
	\end{equation*}
	Then the comparison principle implies
	\begin{equation*} 
		I(x,t) \ge y(t) \xrightarrow{t\to\infty} \frac{\beta}{K} > 0 \quad \text{ for all } x\in\Omega.
	\end{equation*}
	
	Thus
	\begin{equation}\label{lower_bound_I}
		\liminf_{t\to\infty} \inf_{x\in \Omega}I(x,t) \ge \gamma > 0.
	\end{equation}
	Turning to the equation of $T$ in \eqref{PDEsys_main} on the time interval $((j-1)\L, j\L-\xi)$, $j\ge K_0$, we have
	\begin{equation*}
		\partial_t T \le \nabla\cdot(d_2(x,t,T)\nabla T) + r_2(x,t,N,T,I,U)T.
	\end{equation*}
	Using the assumption that $r_2 \le R_0$ uniformly in all variables, and thanks to Theorem \ref{thm1}, $$\limsup_{t\to\infty}\|T(t)\|_{L^\infty(\Omega)} \le \omega < +\infty$$
	we can then use comparison principle to get
	\begin{equation*}
		\|T(t)\|_{L^\infty(\Omega)} \le \|T((j-1)\L)\|_{L^\infty(\Omega)} + \omega R_0(t-(j-1)\L), \quad \forall t\in ((j-1)\L, j\L -\xi).
	\end{equation*}
	In particular, it holds
	\begin{equation}\label{ff1}
		\|T(j\L-\xi)\|_{L^\infty(\Omega)} \le \|T((j-1)\L)\|_{L^\infty(\Omega)} + \omega R_0(L-\xi) \quad \forall j\ge K_0+1.
	\end{equation}
	For $t\in (j\L-\xi, j\L)$, we use \eqref{H5} and the lower bound \eqref{lower_bound_I} to get
	\begin{equation*} 
		\partial_t T \le \nabla\cdot(d_2(x,t,T)\nabla T) + (r_2 - c_2I)T \le \nabla\cdot(d_2(x,t,T)\nabla T) + (R_0 - c_2\gamma)T.
	\end{equation*}
	Therefore, for $R_0<R_*:= c_2\gamma$, it follows that
	\begin{equation*} 
		\|T(t)\|_{L^{\infty}(\Omega)}\le e^{-\delta_*(t-(j\L-\xi))}\|T(j\L - \xi)\|_{L^{\infty}(\Omega)}, \quad \forall t\in (j\L-\xi, j\L),
	\end{equation*}
	and in particular
	\begin{equation}\label{f2}
		\|T(j\L)\|_{L^\infty(\Omega)} \le e^{-\delta_*\xi}\|T(j\L-\xi)\|_{L^\infty(\Omega)} \quad \forall j\ge K_0+1.
	\end{equation}
	From \eqref{ff1} and \eqref{f2} we obtain
	\begin{equation*} 
		\|T(j\L)\|_{L^\infty(\Omega)} \le e^{-\delta_*\xi}\|T((j-1)\L)\|_{L^{\infty}(\Omega)} + e^{-\delta_*\xi}\omega R_0(L-\xi) \quad \forall j\ge K_0+1.
	\end{equation*}
	Thanks to this, for large enough $\L$ and $\xi$ sufficiently close to $\L$, we have
	\begin{equation*} 
		\|T(j\L)\|_{L^\infty(\Omega)} \le \frac{1}{2}\|T((j-1)\L)\|_{L^\infty(\Omega)} \quad \forall j\ge K_0,
	\end{equation*}
	which implies the exponential decay of $T(t)$ for large enough $t>0$. 
\end{proof}

\section{The optimal injection rate of chemotherapeutic drug}\label{sec5}

Throughout this section, we assume the nonnegative initial data for \eqref{PDEsys_main} is fixed and bounded. Let $t_0$ be a fixed time and fix a constant $\mathcal A_0$ which is larger than $A_1$ in \eqref{H2} for all $\tilde t$. We introduce an admissible set
\[ U_{ad}=\left\{ v\in L^{\infty}(S\times(0,t_0))\,|\, \mathcal A_0
\ge v(x,t)\geq 0 \; \text{a.e. in }\; S\times (0,t_0)\right\}.\]

For each $v\in U_{ad}$, under assumptions in Theorem \ref{thm1} there exists a unique bounded solution to \eqref{PDEsys_main}, which allows us to define the solution map
\begin{align*}
	\mathcal{S}: U_{ad} &\to \left(L^{\infty}(\Omega\times(0,t_0))\right)^4\\
	v &\mapsto \mathcal{S}(v) = (N,T,I,U).
\end{align*}
In the following, we will write $\mathcal{S}(v)(Z) = Z$ for $Z\in \{N,T,I,U\}$. 
Let
\[ A_0=\int_{\Omega} N_0(x)dx, \quad B_0=\int_{\Omega}I_0(x)dx\]
where $N_0$ and $I_0$ are the given nonnegative initial data, and define
\bys
A(t):=\int_{\Omega}\mathcal{S}(u)(N)(x,t)dx,\\
B(t):=\int_{\Omega}\mathcal{S}(u)(I)(x,t)dx.
\eys

During a chemotherapeutic drug treatment, one of the main goals is to 
find the optimal drug injection rate $v(x,t)$  which will minimize the total amount of tumor cells
at $t_0$. This leads to an optimal control problem in which $v(x,t)$ is the control variable.

For every $v\in U_{ad}$, we define the cost functional as follows:
\[ J(v)=\int_{\Omega}\mathcal{S}(v)(T)(x,t_0) dx+\lambda \|v\|_{L^{\infty}(S\times (0,t_0))},\]
where $\lambda> 0$ is a given regularization parameter.

\medskip
In practice, during a chemotherapeutic drug treatment for cancer patients,  we have to make
sure that the normal and immune cells maintain an
acceptable level. Hence we impose the  constraints
\begin{equation*}
	\int_{\Omega}N(x,t)dx\geq a_0>0\quad\text{and}\quad \int_{\Omega}I(x,t) dx \geq b_0>0\,\text{for}\, t\in[0,t_0],
\end{equation*}
where $0<a_0<A_0$ and $0<b_0<B_0$. It is important to note that for given $t_0>0$, $s(x,t)$ and the set of parameters and initial data in \eqref{PDEsys_main}, it seems not possible to randomly select the values $a_0$ and $b_0$ above. As a result, we assume

\begin{enumerate}[label=(\textbf{H\theenumi}),ref=\textbf{H\theenumi}]
	\setcounter{enumi}{5}
	\item\label{H6} $a_0$ and $b_0$ are values for which there exists at least one $v\in U_{ad}$ so that
	\begin{equation}\label{constraint}
		\int_{\Omega}\mathcal S(v)(N)(x,t)dx\geq a_0>0\quad \text{and}\quad  \int_{\Omega}\mathcal S(v)(I)(x,t) dx \geq b_0>0\;\text{for}\; t\in[0,t_0].
	\end{equation}
\end{enumerate}
Note that thanks to the continuity of solution in time, we can obtain \eqref{H6} if $t_0>0$ is sufficiently small. 

\medskip
Now we can state the following optimal control problem:
%	{\bf Optimal Control Problem}: 
\begin{equation}\label{OCP}\tag{\textbf{P}} 
	\begin{cases}
		&\text{Find $u(x,t)\in U_{ad}$ such that}\\
		&J(u;N,T,I,U) =\inf_{v(x,t)\in
			U_{ad}}J(v;N,T,I,U),\\
		&\text{subject to the constraint \eqref{constraint}.}
	\end{cases}
\end{equation} 

\subsection{Existence of an optimal solution}\label{subsec5.1}

The main result in this section is the following theorem.
\begin{theorem}\label{thm2} Let \eqref{H1}--\eqref{H4} and \eqref{H6} hold. Moreover, for any $z\in \mathbb R$, and any $\tilde t>0$, the mapping $\Omega\times [0,\tilde t] \ni (x,t)\to d_i(x,t,z)$ is H\"older continuous in each component.
	Then there exists a function $u\in U_{ad}$ solving the optimal control problem \eqref{OCP}.
\end{theorem}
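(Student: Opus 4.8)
The plan is to use the direct method in the calculus of variations. First I would note that the constraint set is nonempty: by assumption \eqref{H6} there exists at least one admissible $v\in U_{ad}$ satisfying \eqref{constraint}, so the infimum
\[
 m := \inf\{ J(v) : v\in U_{ad},\ v \text{ satisfies }\eqref{constraint}\}
\]
is taken over a nonempty set, and since $J(v)\ge 0$ for all such $v$, we have $m\in[0,\infty)$. Pick a minimizing sequence $(v_k)_{k\ge 1}\subset U_{ad}$ with $J(v_k)\to m$ and each $v_k$ satisfying \eqref{constraint}. Because $0\le v_k\le \mathcal A_0$ a.e. on $S\times(0,t_0)$, the sequence is bounded in $L^\infty(S\times(0,t_0))$, hence (after passing to a subsequence, not relabeled) $v_k \rightharpoonup^* u$ weakly-$*$ in $L^\infty(S\times(0,t_0))$. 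Since $U_{ad}$ is convex and closed in, say, $L^2(S\times(0,t_0))$, it is weakly closed, and the weak-$*$ limit $u$ again lies in $U_{ad}$ (the pointwise bounds $0\le u\le \mathcal A_0$ pass to the limit).

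Next I would establish stability of the state map along this sequence. Let $(N_k,T_k,I_k,U_k) = \mathcal S(v_k)$. By Theorem \ref{thm1} and Proposition \ref{prop1} (or the finite-time bounds in Theorem \ref{thm1}) these are bounded in $L^\infty(Q_{t_0})$ uniformly in $k$, and the energy estimate used in the proof of Theorem \ref{thm1} gives uniform bounds for $Z_k$ in $V_2(Q_{t_0})$ and for $\partial_t Z_k$ in $L^2(0,t_0;(H^1(\Omega))^*)$, for $Z\in\{N,T,I,U\}$; note here that the only place $v_k$ enters the a priori bounds is through $\|v_k\|_{L^\infty(S\times(0,t_0))}\le\mathcal A_0$, which is uniform. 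By the Aubin–Lions lemma, along a further subsequence $Z_k\to Z$ strongly in $L^2(Q_{t_0})$, weakly in $L^2(0,t_0;H^1(\Omega))$, with $\partial_t Z_k \rightharpoonup \partial_t Z$. One then passes to the limit in the weak formulation of Definition \ref{def_weak_sol}: the diffusion terms converge because $d_i(x,t,Z_k)\to d_i(x,t,Z)$ a.e. (continuity of $d_i$ in the third variable together with a.e.\ convergence of $Z_k$) combined with the weak convergence of $\nabla Z_k$ and dominated convergence for the bounded coefficient; the reaction terms $F_i(N_k,T_k,I_k,U_k)$ converge in $L^2(Q_{t_0})$ since the $F_i$ are locally Lipschitz and the arguments are uniformly bounded and converge a.e.; and the boundary term $\iint_{S\times(0,t_0)} v_k H(N_k-a_0)\varphi\,\dH\,dt$ converges because $H(N_k-a_0)\to H(N-a_0)$ in $L^2(S\times(0,t_0))$ (trace of the strong $H^1$-type convergence, plus continuity and boundedness of $H$) while $v_k\rightharpoonup^* u$ — a product of weak-$*$ and strong convergence. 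Hence $(N,T,I,U)$ is the weak solution associated to $u$, i.e.\ $(N,T,I,U)=\mathcal S(u)$; uniqueness (guaranteed by the Hölder hypothesis on $d_i$) makes this well-defined.

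Finally I would check admissibility of the limit and lower semicontinuity of the cost. Since $N_k\to N$ and $I_k\to I$ in $L^2(Q_{t_0})$, for a.e.\ $t$ we have $\int_\Omega N_k(x,t)\,dx \to \int_\Omega N(x,t)\,dx$ along a subsequence; using the uniform-in-time equicontinuity coming from the $\partial_t Z_k$ bound (so the maps $t\mapsto\int_\Omega Z_k(\cdot,t)$ are bounded in $C^{1/2}([0,t_0])$ and converge uniformly along a subsequence), the inequalities $\int_\Omega N_k(x,t)\,dx\ge a_0$ and $\int_\Omega I_k(x,t)\,dx\ge b_0$ for all $t\in[0,t_0]$ pass to the limit, so $u$ satisfies \eqref{constraint}. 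For the functional, the first term $\int_\Omega T_k(x,t_0)\,dx \to \int_\Omega T(x,t_0)\,dx$ by the same continuity-in-time argument at $t=t_0$, while the regularization term is weakly-$*$ lower semicontinuous: $\|u\|_{L^\infty(S\times(0,t_0))}\le\liminf_k\|v_k\|_{L^\infty(S\times(0,t_0))}$. Therefore $J(u)\le\liminf_k J(v_k)=m$, and since $u$ is admissible, $J(u)=m$; thus $u$ solves \eqref{OCP}. The main obstacle is the passage to the limit in the two $N_k$-dependent nonlinearities — the boundary term $v_k H(N_k-a_0)$ and the diffusion coefficients $d_i(x,t,Z_k)$ — which requires the strong $L^2$ (and trace) convergence of the states, and in particular justifying the trace convergence $H(N_k-a_0)\to H(N-a_0)$ on $S\times(0,t_0)$; this is where the uniform $V_2$-bounds and Aubin–Lions are essential, and where the argument would otherwise break if only weak convergence of the states were available.
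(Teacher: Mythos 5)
Your argument is correct and reaches the same conclusion, but it takes a noticeably different route from the paper. The paper handles the constraint \eqref{constraint} by a penalty method: it introduces the indicator function $\beta$ and smooth approximations $\beta_\varepsilon$, minimizes the penalized functional $\tilde J$ with $\varepsilon=1/n$, and observes that the minimizing sequence then automatically satisfies the constraint; you minimize directly over the constrained set, which is cleaner since the penalty plays essentially no role beyond keeping the minimizing sequence admissible. The second difference is the compactness mechanism for the states: the paper reuses the uniform $C^{\alpha,\alpha/2}(\bar\Omega\times[0,t_0])$ bounds from the De Giorgi--Nash--Moser argument of Corollary \ref{corollary}, so the states converge uniformly on $\bar\Omega\times[0,t_0]$, making the convergence of the nonlinearities, of the boundary term $u_nH(N_n-a_0)$, of the constraint integrals, and of $\int_\Omega T_n(\cdot,t_0)\,dx$ immediate. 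You instead rely on Aubin--Lions (strong $L^2$ convergence) and must work harder at exactly the two points the paper gets for free: the trace convergence $H(N_k-a_0)\to H(N-a_0)$ on $S\times(0,t_0)$ (which does follow, as you indicate, from the trace interpolation inequality combined with the uniform $V_2$ bounds) and the evaluation of the constraint and of the cost at individual times, which you handle via the $C^{1/2}$-in-time equicontinuity of $t\mapsto\int_\Omega Z_k(\cdot,t)\,dx$ coming from the $\partial_t Z_k$ bounds. In exchange, your write-up is more explicit than the paper's on the points it leaves implicit: admissibility of the limit control, weak-$*$ lower semicontinuity of the $L^\infty$ regularization term, and the identification $J(u)=\inf J$. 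Both approaches use the H\"older hypothesis on the $d_i$ only to invoke uniqueness and thereby identify the limit states with $\mathcal S(u)$.
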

\begin{proof} 
	Define 
	\[ M_0=\min\{a_0, b_0\}>0.\] Due to the constraint \eqref{constraint}, we introduce an indicator function (penal function)
	$\beta(z)$:
	\[
	\beta(z)=\left\{ 
	\begin{array}{ll}
		0,    & \mbox{if $z\geq M_0$ }\\
		\infty, & \mbox{otherwise}.
	\end{array}
	\right.
	\]
	The indicator function will ensure that the drug injection will be stopped immediately once
	the amount of either normal cells or immune cells is below the acceptable level. 
	Since the cost functional $J$ is not continuous, we make a smooth approximation with a small $\varepsilon >0$, denoted by $\beta_{\varepsilon}(z)$, for the indicator function $\beta(z)$ by defining a smooth nonincreasing convex function $\beta_\varepsilon(z)$ on $[0,\infty)$ such that $\beta_\varepsilon(z)=0$ if $z\ge M_0+\varepsilon$ and $\beta_\varepsilon(M_0)=\frac{1}{\varepsilon}$. 
	%We use $\beta_{\varepsilon}(s)$ to control the lower $L^{1}(\Omega)$-bound of $I(x,t)$ and $N(x,t)$ for $t\in[0,t_0]$.
	Consider the following approximate cost functional:
	\[ \tilde J(v;N,T,I,U):=J(v;N,T,I,U)+\beta_{\varepsilon}\left(\inf_{t\in[0,t_0]}\left\{\int_\Omega N(x,t)dx,\int_\Omega I(x,t)dx\right\}\right).\]
	
	The cost functional $\tilde J(v; N,T,I,U)$ is nonnegative, but not necessarily convex. However, we can still define
	\[ J_0 =\inf_{v\in U_{ad}}\tilde J(v; N,T,I,U).\]
	From \eqref{H6}, we know each $J_0$ is finite as long as the constrain condition is satisfied. In addition, the construction of $\beta_{\varepsilon}$ implies that for each $n\in \mathbb N$ there exists $u_n\in U_{ad}$ such that 
	\[J_0\le \tilde J(u_n;N,T,I,U)\le  J_0+1/n,\]
	where we choose $\varepsilon=\frac{1}{n}$.
	As a result, 
	\[J(u_n;N,T,I,U)=\tilde J(u_n;N,T,I,U)\]
	for all $n\in \mathbb N$. Let $N_n,T_n,I_n,U_n$ be the solution to \eqref{PDEsys_main} associated with $v=u_n$. The results of the previous section imply this set $\{(N_n,T_n,I_n,U_n)\,|\,n\in \mathbb N\}$ is componentwise nonnegative and uniformly sup norm bounded, and the constrain \eqref{constraint} is satisfied for all $n\in \mathbb N$. Moreover, 
	\[ \lim_{n\to \infty} J(u_n; N_n,T_n,I_n, U_n) = J_0.\]
	Now, due to the construction of $\tilde{J}$ we know the functions $u_n$ are uniformly sup norm bounded.
	Therefore,  there exists a function $u(x,t)\in L^\infty(S_{t_0})$ such that 
	\begin{equation*}
		u_n \rightharpoonup u \quad \text{ weakly in } L^2(S_{t_0})
	\end{equation*}
	and
	\begin{equation*} 
		u_n \overset{*}{\rightharpoonup} u \quad \text{ weakly-* in } L^{\infty}(S_{t_0}).
	\end{equation*}
	So, similar to the proof of Corollary \ref{corollary} in the previous section, we know the sequence $\{(N_n,T_n,I_n,U_n)\}$ is bounded in $C^{\alpha,\alpha/2}(\bar\Omega\times[0,t_0])$, which is compactly embedded in $C(\bar\Omega\times[0,t_0])$.  Therefore, there exists a subsequence, still denoted by $(N_n,T_n,I_n,U_n)$, which converges to
	a limit $(N^*,T^*,I^*,U^*)$  in $C(\bar\Omega\times[0,t_0])$ with respect to the sup norm. 
	Consequently, 
	\[ F_i(N_n,T_n,I_n,U_n)\rightarrow F_i(^*N,T^*,I^*,U^*) \mbox{ in $C(\bar\Omega\times[0,t_0])$ as $n\rightarrow \infty$}, \]
	for each $i=1,2,3,4.$
	Note that a weak solution for $U_n(x,t)$  in $Q_{t_0}$ satisfies the following integral identity:
	\bys
	& & \int_{0}^{t_0}\int_{\Omega}[-U_nw_t+d_4\nabla U_n\cdot \nabla w]dxdt+\int_{0}^{t_0}\int_{S} u_nH(N_n-a_0)w d\mathcal H d\tau\\
	& & =\int_{0}^{t_0}\int_{\Omega} F_4(N_n,T_n,I_n U_n) w dxdt+\int_{\Omega}U_0(x)w(x,0)dx, 
	\eys
	for all test function $w(x,t)\in H^1(0,t_0;L^2(\Omega))\cap L^2(0,t_0;H^1(\Omega))$
	with $w(x,t_0)=0$. Since
	\bys
	& & \nabla U_n\rightharpoonup \nabla U^*, \h \mbox{weakly in $L^2(Q_{t_0})$};\\
	& & F_4(N_n,T_n,I_n,U_n) \rightarrow F_4(N^*,T^*,I^*,U^*), \h \mbox{strongly in $L^1(Q_{t_0})$}.
	\eys 
	It follows that $U^*(x,t)$ is a weak solution to the equation of $U$ in \eqref{PDEsys_main}.  Thus, we see the boundary condition \eqref{BC_IC} for $U^*$ is satisfied. Similarly, the other boundary condition is also satisfied.
	Consequently, we see that $(N^*,T^*, I^*, U^*)$ is a weak solution
	of the system \eqref{PDEsys_main}--\eqref{BC_IC} corresponding to the limit control $u(x,t)$. This concludes the proof of the existence of an optimal control function $u(x,t)$.
\end{proof}

\subsection{The first-order optimality condition}\label{subsec5.2}

In this section we will derive the optimality condition. The crucial step is to find the Gateaux derivatives {of the the cost function $J$ and the solution map $\mathcal S$} with respect to the injection rate $v(x,t)$. { In order to do that, we first show that the solution map $\mathcal S$ is continuous provided the diffusion coefficients are H\"older continuous	
	\begin{lemma}
		Assume \eqref{H1}--\eqref{H4}, and the mapping $\Omega\times [0,\tilde t] \ni (x,t)\to d_i(x,t,z)$ is H\"older continuous in each component for any $z\in \mathbb R$ and any $\tilde t>0$. Then the solution map $\mathcal S$ is continuous from $U_{ad}$ to $(L^{\infty}(\Omega\times(0,t_0)))^4$.
	\end{lemma}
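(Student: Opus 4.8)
The plan is to combine the uniform parabolic estimates already available with the uniqueness statement of Theorem \ref{thm1}. Fix a sequence $v_k \to v$ in $U_{ad}$ (strongly in $L^{\infty}(S\times(0,t_0))$; the argument below works verbatim if one only assumes $v_k \overset{*}{\rightharpoonup} v$), and let $(N_k,T_k,I_k,U_k) = \mathcal S(v_k)$. Since $0\le v_k \le \mathcal A_0$ for all $k$, Theorem \ref{thm1} yields a bound $\|N_k\|_{L^{\infty}(Q_{t_0})} + \|T_k\|_{L^{\infty}(Q_{t_0})} + \|I_k\|_{L^{\infty}(Q_{t_0})} + \|U_k\|_{L^{\infty}(Q_{t_0})} \le C$ with $C$ independent of $k$, and then, repeating the De Giorgi--Nash--Moser argument from the proof of Corollary \ref{corollary} on $Q_{t_0}$ (all right-hand sides and the boundary data being bounded in $L^{\infty}$ uniformly in $k$), a uniform bound for $(N_k,T_k,I_k,U_k)$ in $\big(C^{\alpha,\alpha/2}(\bar\Omega\times[0,t_0])\big)^4$ for some $\alpha\in(0,1)$, together with $\|\nabla Z_k\|_{L^2(Q_{t_0})}\le C$ for $Z\in\{N,T,I,U\}$.

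Next I would extract limits along subsequences. By the compact embedding $C^{\alpha,\alpha/2}(\bar\Omega\times[0,t_0]) \hookrightarrow C(\bar\Omega\times[0,t_0])$, every subsequence admits a further subsequence (not relabelled) with $Z_k \to Z^*$ in $C(\bar\Omega\times[0,t_0])$ and $\nabla Z_k \rightharpoonup \nabla Z^*$ in $L^2(Q_{t_0})$ for each $Z\in\{N,T,I,U\}$. Because $d_i(x,t,\cdot)$ is continuous and $Z_k \to Z^*$ uniformly, one has $d_i(x,t,Z_k) \to d_i(x,t,Z^*)$ uniformly, hence $d_i(x,t,Z_k)\nabla Z_k \rightharpoonup d_i(x,t,Z^*)\nabla Z^*$ weakly in $L^2(Q_{t_0})$; similarly $F_i(N_k,T_k,I_k,U_k)\to F_i(N^*,T^*,I^*,U^*)$ uniformly, since each $F_i$ in \eqref{def_F} depends continuously on $(N,T,I,U)$ and $s$ is fixed. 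For the boundary term in the $U$-equation, the uniform convergence $N_k \to N^*$ on $\bar\Omega$ and $H\in C^1$ give $H(N_k - a_0) \to H(N^* - a_0)$ uniformly on $S\times(0,t_0)$, so, pairing this uniformly convergent and uniformly bounded factor with $v_k \to v$, we get $\iint_{S\times(0,t_0)} v_k H(N_k - a_0)\varphi\,\dH dt \to \iint_{S\times(0,t_0)} v\, H(N^* - a_0)\varphi\,\dH dt$ for every test function $\varphi$. Passing to the limit in the weak formulation of Definition \ref{def_weak_sol} then shows that $(N^*,T^*,I^*,U^*)$ is a global non-negative bounded weak solution of \eqref{PDEsys_main}--\eqref{BC_IC} associated with the control $v$.

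Finally, under the assumed H\"older continuity of $(x,t)\mapsto d_i(x,t,z)$, the uniqueness part of Theorem \ref{thm1} forces $(N^*,T^*,I^*,U^*) = \mathcal S(v)$ independently of the subsequence chosen; the usual subsequence principle (if every subsequence has a sub-subsequence converging to the same limit, the full sequence converges) then gives $\mathcal S(v_k) \to \mathcal S(v)$ in $C(\bar\Omega\times[0,t_0])$, hence in $\big(L^{\infty}(\Omega\times(0,t_0))\big)^4$, which is exactly the asserted continuity. The step I expect to be most delicate is the passage to the limit in the boundary nonlinearity $v_k H(N_k - a_0)$ and in the quasilinear fluxes $d_i(x,t,Z_k)\nabla Z_k$: both are products of a weakly convergent factor with a factor that must be shown to converge \emph{uniformly}, which is precisely why the uniform $C^{\alpha,\alpha/2}$-bound (and the resulting uniform convergence of the traces $N_k|_{S}$) is indispensable. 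An alternative route via a direct energy estimate for the difference of two solutions would instead yield quantitative (Lipschitz-type) continuity, but that is technically heavier here because of the quasilinear diffusion terms $d_i(x,t,Z)\nabla Z$, so I would favour the compactness argument above.
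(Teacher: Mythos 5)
Your argument is correct and is essentially the paper's own proof: both extract, via the uniform $L^{\infty}$ and $C^{\alpha,\alpha/2}$ bounds and compact embedding into $C(\bar\Omega\times[0,t_0])$ (as in Theorem \ref{thm2} and Corollary \ref{corollary}), a subsequence of states converging to a weak solution for the limit control, identify it as $\mathcal S(v)$ by the uniqueness from Theorem \ref{thm1} under the H\"older assumption, and conclude full-sequence convergence by the subsequence principle. You merely spell out the limit passage (quasilinear fluxes and the boundary term $v_kH(N_k-a_0)$) that the paper delegates to the proof of Theorem \ref{thm2}.
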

	\begin{proof}
		Let $\{u_n\}_{n\ge 1}$ be a sequence in $U_{ad}$ such that $u_n \to u$ in $L^{\infty}(\Omega\times(0,t_0))$. Denote by $(N_n,T_n,I_n,U_n) = \mathcal{S}(u_n)$. Similarly to the proof of Theorem \ref{thm2}, there is a subsequence $\{n_k\}\subset \{n\}$ such that
		\begin{equation*} 
			(N_{n_k},T_{n_k},I_{n_k},U_{n_k}) \xrightarrow{n_k\to \infty} (N^*,T^*,I^*,U^*) \quad \text{ in } (L^{\infty}(\Omega\times(0,t_0)))^4
		\end{equation*}
		where $(N^*,T^*,I^*,U^*) = \mathcal{S}(u)$. Thanks to the H\"older continuity of the coefficients, we can apply Theorem \ref{thm1} to get the uniqueness of $(N^*,T^*,I^*,U^*)$, and therefore, we have the convergence of the whole sequence $\{(N_n,T_n,I_n,U_n)\}_{n\ge 1}$. This is precisely the continuity of the solution map $\mathcal S$ and the proof is complete.
	\end{proof}
}

Suppose $u(x,t)\in U_{ad}$ is the optimal control and $(N,T,I,U)$ is the corresponding solution
of the system \eqref{PDEsys_main}, i.e. $(N,T,I,U) = \mathcal{S}(u)$. Let
\[w(x,t)=u(x,t)+\z v(x,t)\in U_{ad}, \h \forall \z\geq 0.\]
{Thanks to the continuity of $\mathcal S$}, we can choose $\z$ sufficiently small such that the constrain \eqref{constraint} holds.
\text{Using similar arguments in the proof of Theorem \ref{thm2}, we get}
\[ \lim_{\z\rightarrow 0}(N_{\z}, T_{\z},I_{\z},U_{\z})=(N,T,I,U)\]
in the weak sense of $V_2(Q_{t_0})$ and strongly in $L^2(Q_{t_0})$. Define
\[\hat{N}=\frac{N-N_{\z}}{\z},\quad  \hat{T}=\frac{T-T_{\z}}{\z},\quad \hat{I}=\frac{I-I_{\z}}{\z},\quad \hat{U}=\frac{U-U_{\z}}{\z}.\]
After a routine calculation, we see that $(\hat{N}, \hat{T},\hat{I},\hat{U})$ satisfies the following system in the weak sense:
\begin{equation}\label{F1}
	\begin{aligned}
		\hat{N}_t-\nabla [d_1\nabla \hat{N}] & = & \frac{\partial F_1}{\partial N}(\theta_{N}^{1})\hat{N}+\frac{\partial F_1}{\partial T}(\theta_{T}^{1})\hat{T}+\frac{\partial F_1}{\partial I}(\theta_{I}^{1})\hat{I}+\frac{\partial F_1}{\partial U}(\theta_{U}^{1})\hat{U},\\
		\hat{T}_t-\nabla [d_2\nabla \hat{T}] & = & \frac{\partial F_2}{\partial N}(\theta_{N}^{2})\hat{N}+\frac{\partial F_2}{\partial T}(\theta_{T}^{2})\hat{T}+\frac{\partial F_2}{\partial I}(\theta_{I}^{2})\hat{I}+\frac{\partial F_1}{\partial U}(\theta_{U}^{2})\hat{U},\scl\\
		\hat{I}_t-\nabla [d_3\nabla \hat{I}] & = & \frac{\partial F_3}{\partial N}(\theta_{N}^{3})\hat{N}+\frac{\partial F_3}{\partial T}(\theta_{T}^{3})\hat{T}+\frac{\partial F_3}{\partial I}(\theta_{I}^{3})\hat{I}+\frac{\partial F_3}{\partial U}(\theta_{U}^{3})\hat{U},\scl\\
		\hat{U}_t-\nabla [d_4\nabla \hat{U}] & = & \frac{\partial F_4}{\partial N}(\theta_{N}^{4})\hat{N}+\frac{\partial F_4}{\partial T}(\theta_{T}^{4})\hat{T}+\frac{\partial F_4}{\partial I}(\theta_{I}^{4})\hat{I}+\frac{\partial F_4}{\partial U}(\theta_{U}^{4})\hat{U},\scl
	\end{aligned}
\end{equation}
{where $\theta^j_Z$ is in between $Z$ and $Z_\eps$ for $Z\in \{N,T,I,U\}$}. Moreover, $(\hat{N}, \hat{T},\hat{I},\hat{U})$ satisfies the initial and boundary conditions
\begin{equation}\label{F2}
	\begin{aligned}
		& (\hat{N}, \hat{T},\hat{I},\hat{U})|_{t=0}=(0,0,0,0), \h x\in \Omega,\scl\\
		&\nabla_{\nu}(\hat N,\hat T,\hat I)=(0,0,0), \h (x,t)\in S\times (0,t_0),\scl\\
		&d_4\nabla_{\nu}\hat U=v(x,t)H(N_{\z}-a_0)+uH'(\theta)\hat{N},\scl
	\end{aligned}
\end{equation}
where $H'$ is the derivative of $H$ and, $\theta$  is a mean-value between $N-a_0$ and $N_{\z}-a_0$. Thanks to the boundedness of $Z$ and $Z_{\eps}$, $Z\in \{N,T,I,U\}$, we have for $i=1,2,3,4$,
\[ \frac{\partial F_i}{\partial N}(\theta^i_N),\; \frac{\partial F_i}{\partial T}(\theta^i_T),\;\frac{\partial F_i}{\partial I}(\theta^i_I),\;\frac{\partial F_i}{\partial U}(\theta^i_U)\]
are all bounded. We can use the energy method to obtain
\[ \|\hat{N}\|_{V_2(Q_{t_0})}+\|\hat{T}\|_{V_2(Q_{t_0})}+\|\hat{I}\|_{V_2(Q_{t_0})}+\|\hat{U}\|_{V_2(Q_{t_0})}\leq C,\]
where $C$ depends only on known data, but not on $\z$. The compactness argument implies that there exists $(N^*,T^*,I^*,U^*) $ such that, up to a subsequence,
\[ \lim_{\z\rightarrow 0}(\hat{N},\hat{T},\hat{I},\hat{U})=(N^*,T^*,I^*,U^*) \quad {\text{strongly in } L^2(\Omega\times(0,t_0)).}\]
It follows from $H'(\theta)=\int_{0}^{1}H'(\tau N_{\z}+(1-\tau)N-a_0)d\tau$ that
\[\lim_{\z\rightarrow 0}H'(\theta)=H'(N-a_0) \quad \text{ in } L^2(S\times(0,t_0)).\]

We summarize the above derivation to obtain the following optimality condition.
\begin{theorem} Let $u(x,t)$ be the optimal injection rate and $(N,T,I,U)$ is the corresponding solution of the system \eqref{PDEsys_main}. Then for any $v(x,t)\in U_{ad}$, there exists a vector function
	$(N^*,T^*,I^*,U^*)$ which is a weak solution to the linear system \eqref{F1}--\eqref{F2} with the last condition in \eqref{F2} replaced by
	\[ d_4\nabla_{\nu}U^*=v(x,t)H(N-a_0)+uH'(N-a_0)N^*.\]
\end{theorem}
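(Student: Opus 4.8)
The plan is to pass to the limit $\z\to 0$ in the difference-quotient system \eqref{F1}--\eqref{F2} and to identify the limit of each coefficient. Almost all of the work has already been done in the discussion preceding the statement: we have the uniform bound $\|\hat N\|_{V_2(Q_{t_0})}+\|\hat T\|_{V_2(Q_{t_0})}+\|\hat I\|_{V_2(Q_{t_0})}+\|\hat U\|_{V_2(Q_{t_0})}\le C$ independent of $\z$, the strong $L^2(Q_{t_0})$ convergence (along a subsequence) $(\hat N,\hat T,\hat I,\hat U)\to(N^*,T^*,I^*,U^*)$ together with weak $L^2(0,t_0;H^1(\Omega))$ convergence of the gradients, and the convergence $H'(\theta)\to H'(N-a_0)$ in $L^2(S\times(0,t_0))$. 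So the first step is simply to write out the weak formulation of \eqref{F1}--\eqref{F2}: for a test function $\varphi$ with $\varphi(\cdot,t_0)=0$, each equation reads $\iint_{Q_{t_0}}(-Z^\z_{\mathrm{diff}}\varphi_t + d_i\nabla \hat Z\cdot\nabla\varphi)\,dx\,dt = \iint_{Q_{t_0}} (\text{sum of four coefficient terms})\varphi\,dx\,dt$ plus, in the $U$-equation, the boundary integral $\iint_{S\times(0,t_0)} \bigl(vH(N_\z-a_0)+uH'(\theta)\hat N\bigr)\varphi\,d\mathcal H\,dt$.

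The second step is to justify passage to the limit term by term. The time-derivative and diffusion terms pass because of weak $L^2$ convergence of $\hat Z$ and $\nabla\hat Z$ (the $d_i$ are fixed bounded functions; for the diffusion term we use that $d_i(x,t,Z_\z)\to d_i(x,t,Z)$ boundedly a.e., by continuity of $d_i$ in its last slot and the strong convergence $Z_\z\to Z$ established in Theorem~\ref{thm1}'s framework and the preamble). The reaction terms are products of a coefficient $\partial F_i/\partial Z(\theta^j_Z)$ — which converges boundedly a.e. to $\partial F_i/\partial Z$ evaluated at $(N,T,I,U)$, since $\theta^j_Z$ is squeezed between $Z_\z$ and $Z$, each $\to Z$, and the partials of the $F_i$ in \eqref{def_F} are continuous — times $\hat Z\to Z^*$ strongly in $L^2$; the product therefore converges in $L^1(Q_{t_0})$, which is enough after multiplying by the bounded test function. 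For the boundary term, $H(N_\z-a_0)\to H(N-a_0)$ strongly in $L^2(S\times(0,t_0))$ by continuity of $H$ and the trace convergence of $N_\z$, while $uH'(\theta)\hat N\to uH'(N-a_0)N^*$ in $L^1(S\times(0,t_0))$ using the stated convergence of $H'(\theta)$, boundedness of $u$, and strong $L^2$ trace convergence of $\hat N$ (which follows from the uniform $V_2$ bound via a trace/compactness argument, e.g. Aubin--Lions applied to $\hat N$). Collecting these limits shows $(N^*,T^*,I^*,U^*)$ solves \eqref{F1}--\eqref{F2} with the boundary datum $d_4\nabla_\nu U^* = vH(N-a_0)+uH'(N-a_0)N^*$, and the zero initial condition survives because each $\hat Z$ starts at $0$.

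The main obstacle is the rigorous handling of the boundary term: one must control the trace of $\hat N$ uniformly in $\z$ and pass to the limit there, which is delicate because $V_2$-bounds alone give only $L^2(0,t_0;H^{1/2}(S))$ on the trace, so one should additionally extract temporal compactness — bound $\partial_t\hat N$ in $L^2(0,t_0;(H^1(\Omega))^*)$ from equation \eqref{F1} (all right-hand coefficients and $\hat Z$ are $L^2$-bounded, the boundary datum is $L^\infty$-bounded), then invoke Aubin--Lions with the compact trace embedding to get strong $L^2(S\times(0,t_0))$ convergence of $N_\z|_S$ and of $\hat N|_S$. A secondary point worth a sentence is that the whole construction is along a subsequence; since no uniqueness for the linearized system is claimed in the statement, it suffices to exhibit one such $(N^*,T^*,I^*,U^*)$, so no extra argument is needed there. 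Everything else is routine and the result follows.
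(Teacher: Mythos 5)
Your proposal follows essentially the same route as the paper: the paper's proof of this theorem is precisely the derivation preceding the statement (difference quotients $\hat N,\hat T,\hat I,\hat U$, the uniform $V_2(Q_{t_0})$ bound, strong $L^2$ convergence along a subsequence, and $H'(\theta)\to H'(N-a_0)$), which is then "summarized" by passing to the limit in \eqref{F1}--\eqref{F2} exactly as you do. Your extra care with the boundary term (bounding $\partial_t \hat N$ in $L^2(0,t_0;(H^1(\Omega))^*)$ and invoking Aubin--Lions to obtain strong trace convergence) merely makes explicit a detail the paper leaves implicit, so the argument is the same.
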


\section{Conclusion}\label{sec6} 
{\normalfont In this paper we studied a reaction-diffusion system arising from a cancer treatment, where the drug is injected from the boundary of the domain under the condition that number of normal cells is above a certain level. We first show the global existence of a unique bounded weak solution by using energy method, then show that there exists a maximal attractor. Moreover, if the reproduction rate of tumor cells is eventually small, then it is proved that they die out in large time, despite possible temporal increasing due to Jeff's phenomenon. Finally, an optimal control problem is posed with the aim of finding the optimal dosage of the drug is investigated. By a penalty method, we showed the existence of an optimal solution. This result could be potentially used by medical practitioners to design an automatic drug deliver device during a tumor treatment for patients. More study with clinical data is needed for potential applications in clinics, for instance the presence of Jeff's phenomenon or numerical simulations with real clinical data.}

\medskip
\noindent{\bf Acknowledgement.} 
{\normalfont B.Q. Tang received funding from the FWF project ``Quasi-steady-state approximation for PDE'', number I-5213. This work is partially supported by NAWI Graz.}

\appendix
\section{Proof of Lemma \ref{lem:Linfty}}\label{appendix1}
The proof uses the ideas from \cite{AL1979}. For $k\in \mathbb N$, multiplying \eqref{h1} by $u^{2^k-1}$ leads to
\begin{align*} 
	\frac{1}{2^k}\frac{d}{dt}\int_{\Omega}u^{2^k}dx &+ \frac{2^k-1}{2^{2k-2}}\delta_0\int_{\Omega}\left|\nabla(u^{2^{k-1}}) \right|^2dx\\
	&\le a\int_{\Omega}u^{2^k}dx + b\int_{\Omega}u^{2^k-1}dx + c\int_{S}u^{2^k}d\mathcal{H} + \|d\|_{L^{\infty}(S\times(0,T))}\int_{S} u^{2^k-1}d\mathcal{H}.
\end{align*}
Using 
\begin{equation*} 
	b\int_{\Omega}u^{2^k-1}dx \le \frac{b|\Omega|}{2^k} + \frac{b(2^k-1)}{2^k}\int_{\Omega}u^{2^k}dx,
\end{equation*}
\begin{equation*} 
	\|d\|_{L^{\infty}(S\times(0,T))}\int_S u^{2^k-1}\dH \le \frac{|S|\|d\|_{L^{\infty}(S\times(0,t_0))}}{2^k} + \frac{\|d\|_{L^{\infty}(S\times(0,t_0))}(2^k-1)}{2^k}\int_S u^{2^k}\dH,
\end{equation*}
we get
\begin{equation*} 
	\frac{1}{2^k}\frac{d}{dt}\int_{\Omega}u^{2^k}dx + \frac{2^k-1}{2^{2k-2}}\delta_0\int_{\Omega}\left|\nabla(u^{2^{k-1}}) \right|^2dx \le C_0\int_{\Omega}u^{2^k}dx + C_1\int_{S}u^{2^k}\dH + \frac{C_2}{2^{k-1}}
\end{equation*}
where $C_0, C_1, C_2$ depend only on $a, b, c, |\Omega|, |S|$ and $\|d\|_{L^{\infty(S\times(0,t_0))}}$. It follows from this
\begin{equation*}
	\frac 12\frac{d}{dt}\int_{\Omega}u^{2^k}dx + \delta_0 \int_{\Omega}\left|\nabla(u^{2^{k-1}}) \right|^2dx \le C_02^{k-1}\int_{\Omega}u^{2^k}dx + C_12^{k-1}\int_S u^{2^k}\dH + C_2.
\end{equation*}
Denote by $u_k:= u^{2^{k-1}}$ and $C_k:= \max\{C_0,C_1\}2^{k-1}$ we get
\begin{equation}\label{h2}
	\frac 12\frac{d}{dt}\int_{\Omega}u_k^2dx + \delta_0\int_{\Omega}|\nabla u_k|^2dx \le C_k\int_{\Omega}|u_k|^2dx + C_k\int_{S}|u_k|^2\dH + C_2.
\end{equation}
By the Gagliardo-Nirenberg inequality, for any $\eps_0>0$, it holds
\begin{equation*}
	\int_{\Omega}|v|^2dx \le \eps_0 \int_{\Omega}|\nabla v|^2dx + C_{\Omega,n}\eps_0^{-n/2}\bra{\int_{\Omega}|v|dx}^{2},
\end{equation*}
for some $C_{\Omega,n}$ depending only on $\Omega$ and $n$. We combine this with the trace interpolation inequality for any $\eps>0$
\begin{equation*}
	\begin{aligned}
		\int_{S}|v|^2\dH &\le \eps \int_{\Omega}|\nabla v|^2dx + \frac{C_{\Omega,n}}{\eps}\int_{\Omega}|v|^2dx\\
		&\le \bra{\eps + \frac{C_{\Omega,n}\eps_0}{\eps}}\int_{\Omega}|\nabla v|^2dx + \frac{C_{\Omega,n}^2 \eps_0^{-n/2}}{\eps}\bra{\int_{\Omega}|v|dx}^2.
	\end{aligned}
\end{equation*}
Now we choose 
\begin{equation*} 
	\eps = \frac{\delta_0}{8}C_k^{-1} \quad \text{ and } \eps_0 \le \frac{\delta_0 \eps}{8C_{\Omega,n} C_k} = \bra{\frac{\delta_0}{8}}^2\frac{1}{C_{\Omega,n}} C_k^{-2}.
\end{equation*}
Note that by enlarging $C_{\Omega,n}$ sufficiently, we have $\eps_0 C_k \le \frac{\delta_0}{4}$. Therefore
\begin{align*}
	C_k\int_{\Omega}|u_k|^2dx + C_k\int_{S}|u_k|^2\dH &\le \frac{\delta_0}{4}\int_{\Omega}|\nabla u_k|^2dx + C_{\Omega,n}\bra{C_k^{n+1} + C_k^{n+2}}\bra{\int_{\Omega}|u_k|dx}^2 + C_2.
\end{align*} 
Thus we have
\begin{equation*} 
	\frac 12\frac{d}{dt}\int_{\Omega}|u_k|^2dx + \frac{\delta_0}{2}\int_{\Omega}|\nabla u_k|^2dx \le \widehat{C}_k\bra{\int_{\Omega}|u_k|dx}^2 + C_2
\end{equation*}
where $\widehat{C}_k \sim (2^{n+2})^{k}$. Adding both sides with $(\delta_0/2)\int_{\Omega}|u_k|^2dx$ and using the Gagliardo-Nirenberg inequality again, we arrive at
\begin{equation*}
	\frac{d}{dt}\int_{\Omega}|u_k|^2dx + \delta_0\int_{\Omega}|u_k|^2dx \le \widetilde{C}_k\bra{\int_{\Omega}|u_k|dx}^2 + C_2
\end{equation*}
with $\widetilde{C}_k \sim (2^{n+2})^{k}$. An application of the classical Gronwall inequality yields for any $t\in (0,t_0)$
\begin{equation*}
	\begin{aligned} 
		\int_{\Omega}|u_k(t)|^2dx &\le e^{-\delta_0 t}\int_{\Omega}|u_k(0)|^2dx + \widetilde{C}_k\int_0^te^{-\delta_0(t-s)}\bra{\int_{\Omega}|u_k(s)|dx}^2ds + \frac{C_2}{\delta_0}\\
		&\le \int_{\Omega}|u_k(0)|^2dx + \frac{\widetilde{C}_k}{\delta_0}\bra{\sup_{s\in (0,t_0)}\int_{\Omega}|u_k(s)|dx}^2 + \frac{C_2}{\delta_0}.
	\end{aligned}
\end{equation*}
Replacing $u_k = u^{2^{k-1}}$ again, and taking the root of order $2^k$ we obtain
\begin{equation*} 
	\bra{\sup_{t\in(0,t_0)}\int_{\Omega}u^{2^k}(t)dx}^{1/2^k} \le \bra{\int_{\Omega}|u_0|^{2^k}dx + \frac{\widetilde{C}_k}{\delta_0}\bra{\sup_{s\in (0,t_0)}\int_{\Omega}u^{2^{k-1}}(s)dx}^2 + \frac{C_2}{\delta_0}}^{1/2^k}
\end{equation*}
By denoting $Q_k$ the left hand side of this inequality, we get
\begin{equation*} 
	Q_k \le \bra{\|u_0\|_{L^{\infty}}^{2^k}|\Omega| + \frac{\widetilde{C}_k}{\delta_0}Q_{k-1}^{2^k} + \frac{C_2}{\delta_0}}^{1/2^k}.
\end{equation*}
Thus, with $C_3 = |\Omega| + \delta_0^{-1} + C_2\delta_0^{-1} + 1$, we have
\begin{equation*} 
	\max\{Q_k, \|u_0\|_{L^{\infty}(\Omega)},1\} \le \max\{Q_{k-1}, \|u_0\|_{L^{\infty}(\Omega)},1\}C_3^{1/2^k}\widetilde{C}_k^{1/2^k}.
\end{equation*}
Thus, by $\widetilde{C}_k \sim (2^{n+2})^k$
\begin{equation*} 
	\begin{aligned}
		\max\{Q_k, \|u_0\|_{L^{\infty}(\Omega)},1\} &\le \max\{Q_0, \|u\|_{L^{\infty}(\Omega)},1\}\prod_{k=1}^{\infty}C_3^{1/2^k}\widetilde{C}_k^{1/2^k}\\
		&\le C\max\{\|u\|_{L^\infty(0,t_0;L^1(\Omega))}; \|u_0\|_{L^{\infty}(\Omega)}; 1\}C_3^{\sum_{k\ge 1}(1/2^k)}(2^{n+2})^{\sum_{k\ge 1}(k/2^k)}\\
		&\le C_4\max\{\|u\|_{L^\infty(0,t_0;L^1(\Omega))}; \|u_0\|_{L^{\infty}(\Omega)}; 1\}
	\end{aligned}
\end{equation*}
with $C_4$ depending only on $a, b, c$, $\|d\|_{L^{\infty}(S\times(0,T))}$, $\Omega$ and $n$. Letting $k\to \infty$, we obtain finally the desired estimate of the lemma.

\newcommand{\etalchar}[1]{$^{#1}$}

\end{document}